\documentclass[preprint,12pt]{elsarticle}

\makeatletter
\def\ps@pprintTitle{%
 \let\@oddhead\@empty
 \let\@evenhead\@empty
 \def\@oddfoot{\centerline{\thepage}}%
 \let\@evenfoot\@oddfoot}
\makeatother

\usepackage{graphicx}
\usepackage{amssymb}
\usepackage{longtable}
\usepackage{tikz}
\usepackage{amsmath}
\usepackage{amsthm}

\theoremstyle{plain}
\newtheorem{theorem}{Theorem}[section] 
\newtheorem{lemma}[theorem]{Lemma}

\newtheorem{corollary}[theorem]{Corollary}
\newtheorem{thm}[theorem]{Theorem}
\newtheorem{lem}[theorem]{Lemma}

\theoremstyle{definition}

\newtheorem{defn}[theorem]{Definition}

\theoremstyle{remark}
 
\newtheorem{remark}{Remark}
\newtheorem{conjecture}{Conjecture}

\newcommand \EQ {\overset{n}{\equiv}}

\journal{arXiv}

\begin{document}

\begin{frontmatter}
\title{Classification of edge-transitive propeller graphs}
\author{Matthew C. Sterns}
\ead{mcs279@nau.edu}
\address{Texas, United States}

\begin{abstract}
In this paper, we introduce a family of tetravalent graphs called \emph{propeller graphs}, denoted by $Pr_{n}\left(b,c,d\right)$. We then produce three infinite subfamilies and one finite subfamily of arc-transitive propeller graphs, and show that all such graphs are necessarily members of one of these four subfamilies, up to isomorphism. We close the paper with questions for further investigation, as well as a few conjectures.
\end{abstract}

\begin{keyword}
tetravalent graph \sep arc-transitive \sep edge-transitive \sep tricirculant \sep propeller graph

\MSC[2010] 05C25 \sep 05C30 \sep 20B25

\end{keyword}

\end{frontmatter}

\section{Introduction}
\label{S:Intro}

All graphs in this paper are assumed to be finite, simple, and connected. An \emph{automorphism} of a graph is a permutation on the vertex set which preserves the edges of the graph. Often an author will use the term \emph{symmetry} to refer to an automorphism, though we will not do so here. An automorphism of a graph is called $\left(m,n\right)$\emph{-semiregular} if it has precisely $m$ vertex-orbits, each of length $n$. A \emph{circulant graph}, or \emph{circulant}, is a graph which admits a $\left(1,n\right)$-semiregular automorphism. \emph{Bicirculant}, \emph{tricirculant}, \emph{tetracirculant}, and \emph{pentacirculant} graphs are defined similarly, admitting $\left(m,n\right)$-semiregular automorphisms for $m=2$, $3$, $4$, or $5$, respectively. For any positive integer $s$, an $s$\emph{-arc} in a graph is a sequence of $s+1$ vertices of the graph, say $v_{0},v_{1},\ldots,v_{s}$, not necessarily all distinct, such that any two consecutive terms are adjacent and any three consecutive terms are distinct. A $1$-arc will simply be called an \emph{arc}. We say that $\Gamma$ is \emph{vertex-transitive}, \emph{edge-transitive}, \emph{arc-transitive}, or $s$\emph{-arc-transitive} provided that $G\le Aut\left(\Gamma\right)$ acts transitively on the sets of vertices, edges, arcs, or $s$-arcs of $\Gamma$, respectively. For a vertex $v$, we use $N(v)$ to denote the neighborhood of $v$, that is, the vertices adjacent to $v$.

In recent years, there has been increasing interest in classifying edge-transitive and arc-transitive graphs relative to their valency and circulancy. All arc-transitive circulant graphs were identified independently by Kovacs \cite{ClassCirc} and Li \cite{LiClassCirc}. For trivalent graphs, the classification of bicirculants was handled by Pisanski \cite{ClassCubicBicirc}, relying partly on the results of Frucht, Graver, and Watkins regarding the generalized Petersen graphs \cite{GroupsGenPet} and on joint work with Marusic \cite{SymmHexMolecGraph}. Kovacs, Kutnar, Marusic, and Wilson completed the classification of arc-transitive trivalent tricirculants in 2012 \cite{ClassCubicTricirc} while Frelih and Kutnar that same year found all arc-transitive trivalent tetracirculant and pentacirculant graphs \cite{ClassCubicTetracircPentacirc}. Meanwhile, Kovacs, Kuzman, Malnic, and Wilson exhausted the families of edge-transitive tetravalent bicirculants \cite{ClassTetraBicirc} which, in a similar fashion to the cubic bicirculants and generalized Petersen graphs, largely depended on the classification of edge-transitive rose window graphs \cite{RoseClass}. A classification of the edge-transitive or arc-transitive tetravalent tricirculants has yet to be completed.

It is our hope that this paper will constitute a step toward completing that classification. In section \ref{S:PropGraphs}, we introduce a construction for a family of tetravalent tricirculant graphs which we call \emph{propeller graphs}. We will present a pair of automorphisms common to all propeller graphs: a $\left(3,n\right)$-semiregular automorphism, and an arc-reversing automorphism which establishes that all edge-transitive propeller graphs are arc-transitive also. A particular property of this family is that all propeller graphs have a girth $g$ of at most $6$, which will prove useful in proving the primary theorem of this paper. In section \ref{S:Families}, we present three infinite subfamilies and one finite subfamily of arc-transitive propeller graphs. Section \ref{S:Classification} describes the organization of the classification proof into arguments by girth. In section \ref{S:g=3}, we show that edge-transitive propeller graphs with $g=3$ must be line graphs of $2$-arc-transitive generalized Petersen graphs, which are well known. In section \ref{S:g=4}, we show that edge-transitive propeller graphs with $g=4$ are the underlying graphs of edge-transitive toroidal maps of type $\{4,4\}$, while the edge-transitive propeller graphs where $g\ge 5$ are handled in section \ref{S:g>4}. In section \ref{S:Further}, we conclude the paper with some open questions of further research and investigation, and make conjectures about the automorphism groups and consistent cycles of propeller graphs.

\section{Propeller Graphs}
\label{S:PropGraphs}

\begin{defn}
Let $n\ge 3$ be an integer and let $0<b,c,d<n$ with $d\ne \frac{n}{2}$. The \emph{propeller graph} $\Pr_{n}\left(b,c,d\right)$ is a tetravalent graph with vertex set
\[\{A_{0},A_{1},A_{2},\ldots,A_{n-1},B_{0},B_{1},B_{2},\ldots,B_{n-1},C_{0},C_{1},C_{2},\ldots,C_{n-1}\}\]
and edge set
\[\{\{A_{i},A_{i+1}\},\{A_{i},B_{i}\},\{B_{i},A_{i+b}\},\{B_{i},C_{i+c}\},\{C_{i},B_{i}\},\{C_{i},C_{i+d}\}\vert i\in \mathbb{Z}_{n}\}.\]
\end{defn}

All arithmetic on indices is presumed to be modulo $n$, unless otherwise specified.

We will sometimes refer to the vertices as $A$\emph{ vertices}, $B$\emph{ vertices}, and $C$\emph{ vertices}. It will also help later on to have names for the types of edges of a given propeller graph. Respective to the order given in the definition of the edge set, a propeller graph has $A$\emph{-wings}, $A$\emph{-flats}, $A$\emph{-blades}, $C$\emph{-blades}, $C$\emph{-flats}, and $C$\emph{-wings}, all of which appear in the graph in equal amounts. We define $A$\emph{-spokes} to be the union of $A$-flats and $A$-blades, and define $C$\emph{-spokes} analogously.

By construction, all propeller graphs admit the $\left(3,n\right)$-semiregular automorphism
\[
\begin{tabular}{rr|l}
\cline{3-3} 
 & $A_{i}\mapsto$ & $A_{i+1}$\tabularnewline
$\rho:$ & $B_{i}\mapsto$ & $B_{i+1}$\tabularnewline
 & $C_{i}\mapsto$ & $C_{i+1}$\tabularnewline
\end{tabular}
\]

\[\rho=\left(A_{0},A_{1},A_{2},\ldots,A_{-1}\right)\left(B_{0},B_{1},B_{2},\ldots,B_{-1}\right)\left(C_{0},C_{1},C_{2},\ldots,C_{-1}\right).\]
which induces a cyclic subgroup of the full automorphism group of the graph.

All propeller graphs also admit the involution
\[
\begin{tabular}{rr|l}
\cline{3-3} 
 & $A_{i}\mapsto$ & $A_{-i}$\tabularnewline
$\mu:$ & $B_{i}\mapsto$ & $B_{-i-b}$\tabularnewline
 & $C_{i}\mapsto$ & $C_{-i-b+c}$\tabularnewline
\end{tabular}
\]
which preserves the set of $A$-wings and the set of $C$-wings, transposes the sets of $A$-flats and $A$-blades, and transposes the sets of $C$-flats and $C$ -blades. There are therefore at most four orbits of edges in a propeller graph: the $A$-wings, the $A$-spokes, the $C$-spokes, and the $C$-wings. Since $\mu\rho$ acts as a reflection of the edge $\{A_{0},A_{1}\}$, we have the following corollary.

\begin{corollary}\label{Cor:ETimpliesAT}
Any edge-transitive propeller graph is arc-transitive as well.\end{corollary}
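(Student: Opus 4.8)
The plan is to reduce arc-transitivity to the edge-transitivity hypothesis together with the single edge-reversing automorphism already exhibited. Write $e_{0}=\{A_{0},A_{1}\}$ for the $A$-wing in question and put $\tau=\mu\rho$; since $\mu$ and $\rho$ are both automorphisms of $\Gamma=\Pr_{n}\left(b,c,d\right)$, so is $\tau$, and as observed just above the statement $\tau$ interchanges $A_{0}$ and $A_{1}$, hence fixes $e_{0}$ setwise while reversing it. I would then recall that an arc of $\Gamma$ is precisely an ordered pair $\left(u,v\right)$ with $\{u,v\}$ an edge, so that arc-transitivity of $\Gamma$ is exactly transitivity of $Aut\left(\Gamma\right)$ on such ordered pairs.

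Next I would take two arbitrary arcs $\left(x_{1},y_{1}\right)$ and $\left(x_{2},y_{2}\right)$. By edge-transitivity there are automorphisms $g_{1},g_{2}\in Aut\left(\Gamma\right)$ carrying the edges $\{x_{1},y_{1}\}$ and $\{x_{2},y_{2}\}$ respectively onto $e_{0}$, so each $g_{j}$ sends the $j$-th arc either to $\left(A_{0},A_{1}\right)$ or to $\left(A_{1},A_{0}\right)$. Replacing $g_{j}$ by $\tau g_{j}$ in the second case, I may assume $g_{1}\left(x_{1},y_{1}\right)=g_{2}\left(x_{2},y_{2}\right)=\left(A_{0},A_{1}\right)$, whereupon $g_{2}^{-1}g_{1}$ is an automorphism mapping $\left(x_{1},y_{1}\right)$ to $\left(x_{2},y_{2}\right)$. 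Since the two arcs were arbitrary, this shows $\Gamma$ is arc-transitive.

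There is essentially no obstacle here: the only point needing explicit checking is that $\tau=\mu\rho$ really does reverse $e_{0}$, which is immediate from the displayed action of $\mu$ and $\rho$ on the $A$-indices, and everything else is the standard argument that ``edge-transitive plus one reversible edge implies arc-transitive.'' The corollary is thus a direct consequence of the two automorphisms $\rho$ and $\mu$ constructed before it, using no property of $\Gamma$ beyond those formulas.
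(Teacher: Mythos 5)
Your proof is correct and follows essentially the same route as the paper: the paper's justification is precisely that $\mu\rho$ reverses the edge $\{A_{0},A_{1}\}$, and combined with edge-transitivity this gives arc-transitivity by the standard argument you spell out. The only difference is that you write out the bookkeeping (mapping both arcs to $\left(A_{0},A_{1}\right)$ and composing), which the paper leaves implicit.
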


Some additional facts about these graphs are collected in the following lemmas.
\begin{lemma}\label{Lem:Equal}
\[\Pr\nolimits_{n}\left(b,c,d\right) = \Pr\nolimits_{n}\left(b,c,-d\right).\]
\end{lemma}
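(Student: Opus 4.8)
The plan is to exhibit an explicit graph isomorphism $\varphi \colon \Pr_n(b,c,d) \to \Pr_n(b,c,-d)$ and verify that it carries the edge set of the first graph onto the edge set of the second. The only structural difference between the two graphs lies in the $C$-wings: in $\Pr_n(b,c,d)$ these are the pairs $\{C_i, C_{i+d}\}$, while in $\Pr_n(b,c,-d)$ they are the pairs $\{C_i, C_{i-d}\}$. But of course $\{C_i, C_{i+d}\} = \{C_{i+d}, C_{(i+d)-d}\}$, so the two sets of unordered pairs are literally the same set; the only thing that changes is the ``direction'' in which the $C$-cycle is traversed by the index shift. This suggests reversing the orientation of the $C$ vertices while leaving $A$ and $B$ vertices essentially alone.

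Concretely, I would try the map $\varphi$ defined by $A_i \mapsto A_i$, $B_i \mapsto B_i$, and $C_i \mapsto C_{-i + k}$ for a constant $k \in \mathbb{Z}_n$ to be determined, and then check each of the six edge types. The $A$-wings $\{A_i, A_{i+1}\}$ and $A$-flats $\{A_i, B_i\}$ and $A$-blades $\{B_i, A_{i+b}\}$ involve no $C$ vertices and are fixed pointwise, so they are immediately preserved. For the $C$-wings, $\varphi(\{C_i, C_{i+d}\}) = \{C_{-i+k}, C_{-i-d+k}\}$, which is an edge of $\Pr_n(b,c,-d)$ since it has the form $\{C_j, C_{j-d}\}$ with $j = -i+k$; as $i$ ranges over $\mathbb{Z}_n$ so does $j$, so the $C$-wings map bijectively onto the $C$-wings. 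The constraint on $k$ comes from the $C$-flats $\{C_i, B_i\}$ and the $C$-blades $\{B_i, C_{i+c}\}$: we need $\varphi(\{C_i, B_i\}) = \{C_{-i+k}, B_i\}$ to be a $C$-flat of the target graph, i.e. we need $B_i$ to be matched with $C_{-i+k}$, and likewise the $C$-blade condition must hold simultaneously. Working out what value of $k$ (it will be something like $k = c$, making the $C$-flats and $C$-blades swap roles, mirroring what $\mu$ does) makes both conditions hold is the one genuinely computational point, but it is a short linear check modulo $n$.

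The main obstacle — really the only subtlety — is choosing the translation constant $k$ so that the $B$--$C$ adjacencies (the $C$-flats and $C$-blades together) are respected; picking $k$ correctly will cause these two edge-orbits to be interchanged rather than fixed, analogous to the behavior of the automorphism $\mu$ already described in the text. Once $\varphi$ is pinned down I would note that it is a bijection on the vertex set (it is a bijection on each of the three blocks $A$, $B$, $C$ separately) and that the edge-set computations above show it maps edges to edges in both directions, hence is a graph isomorphism. I should also remark that this is consistent with the hypothesis $d \neq \tfrac n2$ being symmetric in $d$ and $-d$, so the parameter $\Pr_n(b,c,-d)$ is legitimately defined whenever $\Pr_n(b,c,d)$ is. That completes the proof.
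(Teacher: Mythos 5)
Your opening observation --- that $\{C_i,C_{i+d}\}=\{C_{i+d},C_{(i+d)-d}\}$, so the collection of unordered pairs $\{C_i,C_{i+d}\}$ over $i\in\mathbb{Z}_n$ is literally the same set as the collection $\{C_i,C_{i-d}\}$ --- is already the entire proof, and it is the argument the paper intends: the two graphs have identical vertex sets and identical edge sets (every other edge type is written verbatim the same), so they are \emph{equal}. The lemma claims equality, and the paper immediately stresses ``these two graphs are equal, and not merely isomorphic.'' By abandoning that observation and constructing a nontrivial vertex bijection $\varphi$, you switch to proving a strictly weaker statement (isomorphism), which misses the point of the lemma; the correct map is simply the identity.

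Worse, the specific map you propose does not work. With $A_i\mapsto A_i$, $B_i\mapsto B_i$, $C_i\mapsto C_{-i+k}$, the image of the $C$-flat $\{C_i,B_i\}$ is $\{C_{-i+k},B_i\}$, which is an edge of $\Pr_n(b,c,-d)$ only if $-i+k\equiv i$ (flat) or $-i+k\equiv i+c$ (blade) modulo $n$; both conditions depend on $i$, so no constant $k$ satisfies either for all $i$. The ``short linear check'' you defer is exactly where the construction collapses. Reversing the $C$-indices can only be compatible with the $B$--$C$ edges if the $B$-indices (and hence the $A$-indices) are reversed as well, which is what the automorphism $\mu$ of the paper does --- but that is an automorphism of a single graph, not what this lemma is about. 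The fix is simply to delete the isomorphism construction and state that the identity on the vertex set carries the edge set of $\Pr_n(b,c,d)$ onto that of $\Pr_n(b,c,-d)$, by the re-indexing $i\mapsto i+d$ of the $C$-wings.
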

Notice that these two graphs are equal, and not merely isomorphic.

\begin{lemma}\label{Lem:Isom1}
\[\Pr\nolimits_{n}\left(b,c,d\right) \cong \Pr\nolimits_{n}\left(-b,c,d\right)\\
\Pr\nolimits_{n}\left(b,c,d\right) \cong \Pr\nolimits_{n}\left(b,-c,d\right)\]
\end{lemma}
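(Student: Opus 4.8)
The plan is to prove each isomorphism by writing down an explicit bijection of vertex sets and checking that it carries each of the six edge orbits (the $A$-wings, $A$-flats, $A$-blades, $C$-blades, $C$-flats, $C$-wings) of the source graph to an edge of the target graph; since a propeller graph on $3n$ vertices is tetravalent and hence has $6n$ edges, any vertex bijection that sends every edge to an edge is automatically an isomorphism. The guiding observation is that negating $b$ (resp.\ $c$) can be compensated by a cyclic shift of the $B$ vertices together with the $C$ vertices (resp.\ of the $C$ vertices alone), because such a shift commutes with the action of $\rho$ and only permutes the two $A$-neighbors (resp.\ the two $C$-neighbors) of each $B$ vertex among themselves.

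For $\Pr\nolimits_{n}(b,c,d)\cong\Pr\nolimits_{n}(-b,c,d)$: in $\Pr\nolimits_{n}(b,c,d)$ the vertex $B_{i}$ is the unique $B$ vertex adjacent to both $A_{i}$ and $A_{i+b}$, while in $\Pr\nolimits_{n}(-b,c,d)$ that same pair $\{A_{i},A_{i+b}\}$ is the $A$-neighborhood of $B_{i+b}$. This points to the map $\phi\colon A_{i}\mapsto A_{i},\ B_{i}\mapsto B_{i+b},\ C_{i}\mapsto C_{i+b}$. I would then check: the $A$-wings are fixed; the $A$-flat $\{A_{i},B_{i}\}$ and $A$-blade $\{A_{i+b},B_{i}\}$ map to $\{A_{i},B_{i+b}\}$ and $\{A_{i+b},B_{i+b}\}$, exactly the two $A$-edges at $B_{i+b}$ in the target; the $C$-flat $\{C_{i},B_{i}\}$ and $C$-blade $\{C_{i+c},B_{i}\}$ map to $\{C_{i+b},B_{i+b}\}$ and $\{C_{i+b+c},B_{i+b}\}$, the two $C$-edges at $B_{i+b}$; and the $C$-wing $\{C_{i},C_{i+d}\}$ maps to $\{C_{i+b},C_{i+b+d}\}$, a $C$-wing. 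Since $\phi$ is a vertex bijection, it is an isomorphism.

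For $\Pr\nolimits_{n}(b,c,d)\cong\Pr\nolimits_{n}(b,-c,d)$: negating $c$ interchanges the two $C$-neighbors $C_{i}$ and $C_{i+c}$ of $B_{i}$, and this is realized by the map $\psi\colon A_{i}\mapsto A_{i},\ B_{i}\mapsto B_{i},\ C_{i}\mapsto C_{i-c}$. It fixes the $A$-wings, $A$-flats and $A$-blades; it sends the $C$-flat $\{C_{i},B_{i}\}$ and the $C$-blade $\{C_{i+c},B_{i}\}$ to $\{C_{i-c},B_{i}\}$ and $\{C_{i},B_{i}\}$, which are precisely the two $C$-edges at $B_{i}$ in $\Pr\nolimits_{n}(b,-c,d)$ (there $B_{i}\sim C_{i}$ and $B_{i}\sim C_{i-c}$); and it sends the $C$-wing $\{C_{i},C_{i+d}\}$ to $\{C_{i-c},C_{i-c+d}\}$, a $C$-wing. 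Again this is a vertex bijection preserving all edge types, hence an isomorphism.

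There is no substantial obstacle: once the two relabelings are guessed, the proof is a routine check of the six edge orbits, the only care needed being the index bookkeeping (all arithmetic is modulo $n$) and the minor remark that $\Pr\nolimits_{n}(-b,c,d)$ and $\Pr\nolimits_{n}(b,-c,d)$ are themselves legitimate propeller graphs, which holds because the defining constraint $d\ne\frac{n}{2}$ involves only $d$ and $d$ is left unchanged. If a more systematic derivation is preferred, one can posit an affine relabeling $X_{i}\mapsto X_{\varepsilon_{X}i+s_{X}}$ on each of the three vertex classes and solve the linear conditions forced by matching $A$-spokes and $C$-spokes; this forces all $\varepsilon_{X}=1$ and recovers exactly the shift maps $\phi$ and $\psi$ above.
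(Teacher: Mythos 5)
Your proof is correct and takes essentially the same approach as the paper: the paper exhibits the explicit shift maps $\left(A_{i}\mapsto A_{i-b}\right)\left(B_{i}\mapsto B_{i}\right)\left(C_{i}\mapsto C_{i}\right)$ and $\left(A_{i}\mapsto A_{i}\right)\left(B_{i}\mapsto B_{i}\right)\left(C_{i}\mapsto C_{i-c}\right)$, of which your $\psi$ is literally the second and your $\phi$ is the first composed with the semiregular shift $\rho^{b}$. The edge-by-edge verification you carry out is exactly what the paper leaves implicit.
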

\begin{proof}The isomorphisms are given below, respectively.
\begin{align*}
\left(A_{i} \mapsto A_{i-b}\right)\left(B_{i} \mapsto B_{i}\right)\left(C_{i} \mapsto C_{i}\right)\\
\left(A_{i} \mapsto A_{i}\right)\left(B_{i} \mapsto B_{i}\right)\left(C_{i} \mapsto C_{i-c}\right)
\end{align*}
\end{proof}

\begin{lemma}\label{Lem:Isom2}
Let $\left(d,n\right)=1$. Then there is an integer $0<e<n$ such that $de \EQ 1 \EQ ed$, and we have
\[\Pr\nolimits_{n}\left(b,c,d\right) \cong \Pr\nolimits_{n}\left(ce,be,e\right).\]
\end{lemma}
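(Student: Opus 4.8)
The plan is to exhibit an explicit isomorphism obtained by re‑coordinatizing the $C$‑wings. Since $(d,n)=1$, the residue $d$ is a unit in $\mathbb{Z}_{n}$; let $e\in\{1,\ldots,n-1\}$ represent its inverse, so that $de\equiv ed\equiv 1\pmod n$. First I would verify that $\Pr_{n}(ce,be,e)$ is itself a legitimate propeller graph: because $e$ is a unit, $ce$ and $be$ are nonzero modulo $n$, and $e\neq\tfrac{n}{2}$ (a unit is coprime to $n$, whereas $\gcd(\tfrac n2,n)\ge 2$ whenever $n\ge 3$ is even), so the hypotheses of the definition hold with $b'=ce$, $c'=be$, $d'=e$.

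Next I would record the candidate map $\phi:\Pr_{n}(b,c,d)\to\Pr_{n}(ce,be,e)$, writing the target vertices as $A'_{i},B'_{i},C'_{i}$, defined by
\[
A_{i}\mapsto C'_{ie},\qquad B_{i}\mapsto B'_{ie},\qquad C_{i}\mapsto A'_{ie}.
\]
Since multiplication by the unit $e$ permutes $\mathbb{Z}_{n}$, $\phi$ is a bijection on vertex sets. The idea worth stating explicitly is that, as $(d,n)=1$, the $C$‑wings of $\Pr_{n}(b,c,d)$ close up into a single $n$‑cycle $C_{0},C_{d},C_{2d},\ldots$; reading that cycle off in order (which is precisely the substitution $i\mapsto ie$ on subscripts) turns the $C$‑layer into the $A$‑layer of the target, and — forced by the incidences with the $B$‑vertices — the $A$‑layer into the $C$‑layer.

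The core of the argument is then a direct check that $\phi$ carries each of the six edge types of $\Pr_{n}(b,c,d)$ onto an edge type of $\Pr_{n}(ce,be,e)$: the $A$‑wings $\{A_{i},A_{i+1}\}$ map to $\{C'_{ie},C'_{ie+e}\}$ ($C$‑wings of the target), the $A$‑flats to $C$‑flats, the $A$‑blades $\{B_{i},A_{i+b}\}$ to $\{B'_{ie},C'_{ie+be}\}$ ($C$‑blades), the $C$‑blades $\{B_{i},C_{i+c}\}$ to $\{B'_{ie},A'_{ie+ce}\}$ ($A$‑blades), the $C$‑flats to $A$‑flats, and finally the $C$‑wings $\{C_{i},C_{i+d}\}$ to $\{A'_{ie},A'_{ie+de}\}=\{A'_{ie},A'_{ie+1}\}$ ($A$‑wings), this last equality being the only place the relation $de\equiv 1$ is invoked. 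Since $i\mapsto ie$ is a bijection of $\mathbb{Z}_{n}$, each of the six edge orbits of the source is mapped bijectively onto an edge orbit of the target, so $\phi$ is a graph isomorphism.

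I do not expect a genuine obstacle; the substance of the proof is guessing the scaling $i\mapsto ie$ and keeping the bookkeeping of the six edge types straight, after which everything is a routine verification. One minor point I would flag in a remark: the inverse $e$ is only determined by $d$ up to sign (equivalently, by Lemma~\ref{Lem:Equal} the $C$‑wing step is really $\pm d$), but the two choices produce $\Pr_{n}(ce,be,e)$ and $\Pr_{n}(-ce,-be,-e)$, which are isomorphic by Lemmas~\ref{Lem:Isom1} and~\ref{Lem:Equal}, so the choice is immaterial to the statement.
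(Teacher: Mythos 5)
Your proposal is correct and uses exactly the isomorphism the paper gives, namely $A_{i}\mapsto C_{ie}$, $B_{i}\mapsto B_{ie}$, $C_{i}\mapsto A_{ie}$; the paper simply states this map without writing out the edge-type verification you perform. Your extra checks (that the target parameters are admissible and that $de\EQ 1$ is used only for the $C$-wings) are consistent with, and a harmless elaboration of, the paper's one-line argument.
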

\begin{proof}
The isomorphism $\left(A_{i} \mapsto C_{ie}\right)\left(B_{i} \mapsto B_{ie}\right)\left(C_{i} \mapsto A_{ie}\right)$ delivers the result.

\end{proof}

Notice that in the special case where $d \in \{-1,1\}$, we have $\Pr_{n}\left(b,c,\pm 1\right) \cong \Pr_{n}\left(c,b,1\right)$.

We close the section with a couple of other observations about propeller graphs which will help us with the classification.

\begin{lem}A propeller graph $\Gamma$ admits an automorphism sending $\left(A_{0},A_{1}\right)$ to $\left(A_{0},B_{0}\right)$ if and only if $\Gamma$ is edge-transitive.\end{lem}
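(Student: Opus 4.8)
The plan is to prove both directions of the biconditional, with the forward direction being essentially immediate and the reverse direction carrying the content.

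For the forward direction, suppose $\Gamma$ admits an automorphism $\sigma$ with $\sigma(A_0,A_1) = (A_0,B_0)$. The edge $\{A_0,A_1\}$ is an $A$-wing and the edge $\{A_0,B_0\}$ is an $A$-flat, so $\sigma$ merges the $A$-wing orbit with the $A$-spoke orbit under $\langle \rho,\mu,\sigma\rangle$. Recall from the discussion preceding Corollary~\ref{Cor:ETimpliesAT} that the four potential edge-orbits of any propeller graph are the $A$-wings, the $A$-spokes, the $C$-spokes, and the $C$-wings, and that $\mu$ already transposes $A$-flats with $A$-blades and $C$-flats with $C$-blades. So it remains to connect the ``$A$ side'' to the ``$C$ side''. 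Here I would invoke the isomorphism of Lemma~\ref{Lem:Isom2} in the case $(d,n)=1$, or argue directly: the involution $\mu$ together with $\rho$ and $\sigma$ moves $A_0$ to $B_0$, and since $B_0$ is adjacent to both an $A$-spoke and a $C$-spoke, the local action at $B_0$ forces the $C$-spoke orbit to merge with the rest. Carrying this out cleanly — showing that once the $A$-wing, $A$-spoke orbits are fused, \emph{all} edges lie in one orbit — is the first thing to nail down, and it may require a short case analysis or a direct construction of the relevant automorphism rather than pure orbit-counting.

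For the reverse direction, suppose $\Gamma$ is edge-transitive. By Corollary~\ref{Cor:ETimpliesAT}, $\Gamma$ is arc-transitive, so $\mathrm{Aut}(\Gamma)$ acts transitively on the set of arcs. The arc $(A_0,A_1)$ and the arc $(A_0,B_0)$ are two arcs of $\Gamma$, hence there is an automorphism carrying one to the other; this is exactly the required automorphism. This direction is genuinely a one-line consequence of the corollary.

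I expect the main obstacle to be the forward direction's orbit-fusion step: verifying that an automorphism sending an $A$-wing to an $A$-flat, when combined with $\rho$ and $\mu$, actually forces a single edge-orbit rather than just fusing two of the four candidate orbits. The cleanest route is probably to observe that such a $\sigma$ maps the vertex $A_1$ (a degree-$4$ vertex incident to two $A$-wings and two $A$-spokes) to $B_0$ (incident to two $A$-spokes and two $C$-spokes), track where the four edges at $A_1$ must go, and conclude that $C$-spokes and then $C$-wings are swept into the orbit; alternatively one can exhibit the needed automorphism explicitly using the substitutions already recorded in Lemmas~\ref{Lem:Isom1} and~\ref{Lem:Isom2}. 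I would present whichever is shorter, and I would state explicitly that the hypothesis $\sigma(A_0,A_1)=(A_0,B_0)$ fixes $A_0$, which pins down the image of the two $A$-wings at $A_0$ and makes the bookkeeping manageable.
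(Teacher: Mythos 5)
Your reverse direction is fine and matches the intended reading: edge-transitivity gives arc-transitivity via Corollary \ref{Cor:ETimpliesAT}, and arc-transitivity supplies an automorphism carrying the arc $\left(A_{0},A_{1}\right)$ to the arc $\left(A_{0},B_{0}\right)$. Your preferred route for the forward direction is also the paper's: the hypothesis together with $\rho$ and $\mu$ fuses the $A$-wings with the $A$-spokes, and then one tracks the images of the three remaining neighbors of $A_{1}$, which must land on the three remaining neighbors of $B_{0}$, i.e.\ $\{A_{2},B_{1},B_{1-b}\}\sigma=\{A_{b},C_{0},C_{c}\}$; since two of the three remaining edges at $B_{0}$ are $C$-spokes while only one of the three remaining edges at $A_{1}$ is a wing, some $A$-spoke at $A_{1}$ is sent to a $C$-spoke at $B_{0}$, fusing the $C$-spokes into the same orbit.

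The genuine gap is the final step, which you assert (``$C$-wings are swept into the orbit'') but never argue. The neighbor-tracking at $A_{1}\mapsto B_{0}$ says nothing about $C$-wings, because no edge incident to $A_{1}$ or to $B_{0}$ is a $C$-wing; a priori the $C$-wings could remain a separate orbit, leaving two edge-orbits rather than one. The missing idea (the paper's) is: if the $C$-wings formed their own orbit, then every automorphism would preserve the set of $C$-wings, hence the set of their endpoints, which is precisely the set of $C$-vertices; but the tracking step already forces $\sigma$ to send one of the $B$-vertices $B_{1}$, $B_{1-b}$ to a $C$-vertex, so $\sigma$ does not preserve the $C$-vertex set --- contradiction, and all four edge classes collapse into one orbit. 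The alternative routes you float would not close this: Lemmas \ref{Lem:Isom1} and \ref{Lem:Isom2} give isomorphisms between (generally different) propeller graphs, not automorphisms of $\Gamma$, and they hold for \emph{every} propeller graph, whereas the automorphism in question exists only for the edge-transitive ones, so no explicit formula built from those substitutions can succeed in general; likewise ``the local action at $B_{0}$'' (being incident to both $A$-spokes and $C$-spokes) forces nothing by itself, since that is true in every propeller graph, edge-transitive or not.
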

\label{Lem:WingtoFlat}



\begin{proof}
Let $\Gamma$ be a propeller graph with $G = Aut\left(\Gamma\right)$. Suppose there is an automorphism, say $\delta \in G$, which sends $\left(A_{0},A_{1}\right)$ to $\left(A_{0},B_{0}\right)$. By application of $\rho$ and $\mu$, this implies that $A$-wings and $A$-spokes are in the same orbit. Now, $\delta$ must send the remaining neighbors of $A_{1}$ to the remaining neighbors of $B_{0}$, that is, $\{A_{2},B_{1},B_{1-b}\}\delta = \{A_{b},C_{0},C_{c}\}$, so either $B_{1}\delta \in \{C_{0},C_{c}\}$ or $B_{1-b}\delta \in \{C_{0},C_{c}\}$. Hence some $A$-spoke, either $\{A_{1},B_{1}\}$ or $\{A_{1},B_{1-b}\}$, is sent to a $C$-spoke, either $\{B_{0},C_{0}\}$ or $\{B_{0},C_{c}\}$. This establishes that the $C$-spokes share an orbit with the $A$-wings and $A$-spokes. At this point, the $C$-wings are either a part of this orbit under $\delta$, or else $\delta$ preserves the $C$-wings. But $\delta$ cannot preserve the $C$-wings without naturally preserving the set of $C$-vertices, and since either $B_{1}\delta$ or $B_{1-b}\delta$ is a $C$-vertex, this is clearly not the case. Thus all edges are in a single orbit, which establishes the result.

\end{proof}

\begin{lem}A propeller graph has girth at most $6$.\end{lem}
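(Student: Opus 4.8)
The plan is to produce, in an arbitrary propeller graph, one explicit short closed walk and then observe that it either is a genuine cycle of length at most $6$ or collapses to a triangle.

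First I would write down a length-$6$ closed walk built entirely from $A$-wings, $A$-flats, and $A$-blades. Reading off the edge set: $A_{i}\sim A_{i+1}$ is an $A$-wing, $A_{i+1}\sim B_{i+1}$ is an $A$-flat, $B_{i+1}\sim A_{i+1+b}$ is an $A$-blade, $A_{i+1+b}\sim A_{i+b}$ is an $A$-wing, $A_{i+b}\sim B_{i}$ is an $A$-blade, and $B_{i}\sim A_{i}$ is an $A$-flat. Hence
\[A_{i},\ A_{i+1},\ B_{i+1},\ A_{i+1+b},\ A_{i+b},\ B_{i},\ A_{i}\]
is a closed walk of length $6$. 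Moreover its six edges are pairwise distinct: the two $A$-wings, the two $A$-flats, and the two $A$-blades occurring in it are distinct from one another because $0<b<n$ and $n\ge 3$, and an $A$-wing, an $A$-flat, and an $A$-blade can never coincide since they involve incompatible vertex types or would force $b\equiv 0$.

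Next I would extract a cycle. A closed walk whose edges are all distinct spans a subgraph in which every visited vertex has even degree, hence degree at least $2$, and a graph with minimum degree at least $2$ contains a cycle; since the walk has length $6$, this cycle has length at most $6$. (Alternatively, and more concretely: examining the six pairwise differences of the indices $i,\,i+1,\,i+b,\,i+1+b$ shows the four listed $A$-vertices are pairwise distinct unless $b\equiv 1$ or $b\equiv -1\pmod n$, so for $b\notin\{1,n-1\}$ the walk above is already a $6$-cycle.)

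Finally I would treat the degenerate cases $b=1$ and $b=n-1$ directly: if $b=1$ then $\{B_{i},A_{i+1}\}$ is an edge and $A_{i},A_{i+1},B_{i}$ is a triangle, while if $b=n-1$ then $\{B_{i},A_{i-1}\}$ is an edge and $A_{i},A_{i-1},B_{i}$ is a triangle; in either case the girth is $3$. I do not anticipate a real obstacle here: the whole argument is choosing the right six-step walk around a single blade, and the only point requiring care is that this walk degenerates exactly when $b=\pm 1$, which is precisely where a triangle appears in its place.
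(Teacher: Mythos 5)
Your argument is correct and follows essentially the same route as the paper: exhibit the canonical $6$-cycle $\left(A_{i},A_{i+1},B_{i+1},A_{i+1+b},A_{i+b},B_{i}\right)$ when $b\not\equiv\pm 1\pmod{n}$, and fall back to the triangle through $A_{i}$, $B_{i}$, and an adjacent $A$-vertex when $b\equiv\pm 1$. The extra verification of edge-distinctness and the closed-walk-to-cycle remark are harmless additions; the paper simply asserts the $6$-cycle directly.
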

\label{Lem:Girth}
\begin{proof}
Let $\Pr_{n}\left(b,c,d\right)$ be a propeller graph. If $b\EQ \pm 1$, then the graph admits either $\left(A_{0},A_{1},B_{1}\right)$ or $\left(A_{0},A_{1},B_{0}\right)$ as a $3$-cycle. Otherwise, the $6$-cycle $\left(A_{0},A_{1},B_{1},A_{1+b},A_{b},B_{0}\right)$ establishes the result. We use the terms $A$\emph{-canonical} and $C$\emph{-canonical} respectively to describe $6$-cycles of the forms below, and the term \emph{canonical }$6$\emph{-cycles} as a collective reference.
\[\left(A_{i},A_{i+1},B_{i+1},A_{i+1+b},A_{i+b},B_{i}\right)\\
\left(C_{i},C_{i+d},B_{i+d},C_{i+c+d},C_{i+c},B_{i}\right)\]

\end{proof}

\section{Families of Edge-Transitive Propeller Graphs}
\label{S:Families}

In this section, we introduce three infinite families and one finite family of edge-transitive propeller graphs. For each family, we will present an associated automorphism which we call the \emph{defining automorphism}, because a propeller graph admits such an automorphism if and only if it is a member of the corresponding family. Each defining automorphism also sends $\left(A_{0},A_{1}\right)$ to $\left(A_{0},B_{0}\right)$, thereby ensuring its family's edge-transitivity by Lemma \ref{Lem:WingtoFlat}.

\subsection{Family $1$}
The first family of propeller graphs we consider are those of the form $\Pr_{2m}\left(2d,2,d\right)$ with $d^2 \EQ 1$. All members of family $1$ admit the defining automorphism $\sigma_{1}$:

\[
\begin{tabular}{rr|ll}
 & \multicolumn{1}{c}{} & $i\overset{2}{\equiv}0$ & $i\overset{2}{\equiv}1$\tabularnewline
\cline{3-4} 
 & $A_{i}\mapsto$ & $A_{id}$ & $B_{(i-1)d}$\tabularnewline
$\sigma_{1}:$ & $B_{i}\mapsto$ & $A_{id+1}$ & $C_{(i-1)d+2}$\tabularnewline
 & $C_{i}\mapsto$ & $B_{id+1-2d}$ & $C_{(i-1)d+2-d}$\tabularnewline
\end{tabular}
\]

There are some special propeller graphs from family $1$ which we would like to point out. Firstly, the graph $\Pr_{4}\left(2,2,1\right)$ is the only propeller graph isomorphic to a wreath graph, namely, to $W(6,2)$. Wreath graphs admit many isolated, local automorphisms, which creates vertex-stabilizers exponential in the number of vertices \cite{TetraATGraphsUnboundedVS}. The interested reader may see \cite{FamRegGraphsRegMaps} for more information concerning wreath graphs.

Second, a special subset of family $1$ graphs admit an additional involution. These graphs are those for which $n = 6m$ for some $m$, $d\overset{6}{\equiv}5$, and $6d\overset{6m}{\equiv}6$. Since $6d\overset{6m}{\equiv}6$, either $3d\overset{6m}{\equiv}3$, or $m$ is even, $d\overset{12}{\equiv}5$, and $3d\overset{6m}{\equiv}3m+3$. We may denote these graphs as members of family $1^{\star}$. Now, let us fix an integer $r$ in the following way:
\[r = \begin{cases} 1 & \text{if }3d\overset{6m}{\equiv}3 \\ k+1 & \text{if } 3d\overset{6m}{\equiv}3m+3 \text{ and } m=2k.\end{cases}\]

It follows that $12r\overset{6m}{\equiv}12$, and if $3d\overset{6m}{\equiv}3$, $6r\overset{6m}{\equiv}6$, whereas if $3d\overset{6m}{\equiv}3m+3$ and $m=2k$, $6r\overset{6m}{\equiv}3m+6$. The following involution $\sigma_{1^\star}$ is admitted by this special subset of family $1$ graphs.

\[
\resizebox{\textwidth}{!}{
\begin{tabular}{rr|llllll}
& \multicolumn{1}{c}{} & $i\overset{6}{\equiv}0$ & $i\overset{6}{\equiv}1$ & $i\overset{6}{\equiv}2$ & $i\overset{6}{\equiv}3$ & $i\overset{6}{\equiv}4$ & $i\overset{6}{\equiv}5$\tabularnewline
\cline{3-8}
& $A_{i}\mapsto$ & $A_{ir}$ & $A_{(i-1)r+1}$ & $B_{(i-2)r+1}$ & $C_{(i-3)r+3}$ & $C_{(i-4)r+3+d}$ & $B_{(i-5)r+3+d}$\tabularnewline
$\sigma_{1^\star}:$ & $B_{i}\mapsto$ & $B_{ir}$ & $A_{(i-1)r+2}$ & $A_{(i-2)r+1+2d}$ & $B_{(i-3)r+3}$ & $C_{(i-4)r+3+2d}$ & $C_{(i-5)r+5+d}$\tabularnewline
& $C_{i}\mapsto$ & $C_{ir}$ & $B_{(i-1)r+2-2d}$ & $A_{(i-2)r+2d}$ & $A_{(i-3)r+3}$ & $B_{(i-4)r+1+2d}$ & $C_{(i-5)r+5}$\tabularnewline
\end{tabular}
}
\]

It is necessary to point out that, in the case where $m$ is even and $3d\overset{6m}{\equiv}3m+3$, the restriction that $d\overset{12}{\equiv}5$ is required. The propeller graph $\Pr_{6m}\left(2d,2,d\right)$ with $d^{2}\overset{6m}{\equiv}1$, $3d\overset{6m}{\equiv}3m+3$, and $d\overset{12}{\equiv}11$ does not admit the permutation $\sigma_{1^\star}$ as an automorphism, since $\sigma_{1^\star}$ sends the arc $\left(C_{0},C_{d}\right)$ to $\left(C_{0},C_{3m+d}\right)$, which is not an arc.

Of course, graphs in family $1^{\star}$ still admit $\sigma_{1}$, and in fact, $\sigma_{1^\star}\mu\sigma_{1^\star}\mu = \sigma_{1}$ and $\sigma_{1^\star}\rho\sigma_{1^\star}\rho^{-1} = \sigma_{1}$. Hence, in a sense, $\sigma_{1^\star}$ supercedes $\sigma_{1}$ as the defining automorphism for family $1^{\star}$.

\subsection{Family $2$}
The next family of propeller graphs are those of the form $\Pr_{2m}\left(2d,2,d\right)$ with $d^2\EQ -1$. We remark that although the parameters of these graphs are nearly identical to those of family $1$, their automorphism groups differ significantly enough to warrant treating them as a separate family.

The defining automorphism of family $2$ is given below.
\[
\begin{tabular}{rr|ll}
 & \multicolumn{1}{c}{} & $i\overset{2}{\equiv}0$ & $i\overset{2}{\equiv}1$\tabularnewline
\cline{3-4} 
 & $A_{i}\mapsto$ & $A_{id}$ & $B_{(i-1)d}$\tabularnewline
$\sigma_{2}:$ & $B_{i}\mapsto$ & $A_{id-1}$ & $C_{(i-1)d}$\tabularnewline
 & $C_{i}\mapsto$ & $B_{id-1-2d}$ & $C_{(i-1)d-d}$\tabularnewline
\end{tabular}
\]

\subsection{Family $3$}
The third and final infinite family of edge-transitive propeller graphs consists of those of the form $\Pr_{4m}\left(b,b-4,2b-3\right)$ with $b\overset{4}{\equiv}3$ and $8b\EQ 16$. The defining automorphism of family $3$ is given below.
\[
\begin{tabular}{rr|llll}
 & \multicolumn{1}{c}{} & $i\overset{4}{\equiv}0$ & $i\overset{4}{\equiv}1$ & $i\overset{4}{\equiv}2$ & $i\overset{4}{\equiv}3$\tabularnewline
\cline{3-6} 
 & $A_{i}\mapsto$ & $A_{i}$ & $B_{i-1}$ & $C_{i-2}$ & $B_{i+1-b}$\tabularnewline
$\sigma_{3}:$ & $B_{i}\mapsto$ & $A_{i+1}$ & $A_{i-1+b}$ & $C_{i-5+2b}$ & $C_{i+1-b}$\tabularnewline
 & $C_{i}\mapsto$ & $A_{i+2}$ & $B_{i-1+b}$ & $C_{i-8+4b}$ & $B_{i+5-2b}$\tabularnewline
\end{tabular}
\]

\subsection{Family $4$}
This family differs significantly from the others in that it only has finitely many members, namely, $\Pr_{5}\left(1,2,2\right)$, $\Pr_{10}\left(1,2,2\right)$, $\Pr_{10}\left(1,7,7\right)$, $\Pr_{10}\left(6,2,7\right)$, and $\Pr_{10}\left(6,7,2\right)$. The graph $\Pr_{10}\left(6,2,7\right)=\Pr_{10}\left(6,2,3\right)$ is uniquely interesting in that it admits both the defining automorphism of family $2$ and the defining automorphism of family $4$, shown below; that is, it is a member of both families. This is the only example of an edge-transitive propeller graph in multiple families.
\[
\begin{tabular}{rr|lllll}
 & \multicolumn{1}{c}{} & $i\overset{5}{\equiv}0$ & $i\overset{5}{\equiv}1$ & $i\overset{5}{\equiv}2$ & $i\overset{5}{\equiv}3$ & $i\overset{5}{\equiv}4$\tabularnewline
\cline{3-7} 
 & $A_{i}\mapsto$ & $A_{i}$ & $B_{i-1}$ & $C_{i-2+c}$ & $C_{i+b}$ & $B_{i-1+b}$\tabularnewline
$\sigma_{4}:$ & $B_{i}\mapsto$ & $A_{i+1}$ & $C_{i-1}$ & $B_{i-2+c}$ & $C_{i+b+d}$ & $A_{i-1+b}$\tabularnewline
 & $C_{i}\mapsto$ & $B_{i+1}$ & $B_{i-1-c}$ & $A_{i-2+c}$ & $C_{i+b+2d}$ & $A_{i-2+b}$\tabularnewline
\end{tabular}
\]

The automorphism $\sigma_{4}$ requires that $10\EQ 0$, $b\overset{5}{\equiv}1$, $c\overset{5}{\equiv}d\overset{5}{\equiv}2$, $5+b+c+d\EQ 0$, $2c\EQ 2d\EQ -6$, and $2b\EQ 2$. In particular, the first implication forces the family's finiteness.

\begin{remark}
This family presents more than its fair share of open questions regarding propeller graphs. The investigation into arc-transitive propeller graphs began in part by examining the image of the $A$\emph{-wing cycle}
\[\left(A_{0},A_{1},A_{2},\ldots,A_{-1}\right)\]
of known arc-transitive propeller graphs under an automorphism sending $\left(A_{0},A_{1}\right)$ to $\left(A_{0},B_{0}\right)$. In many cases, this automorphism sent the $A$-wing cycle to a cycle $\left(A_{0},B_{0},A_{b},B_{b},A_{2b},\ldots,B_{-b}\right)$, the investigation of which yielded families $1$ and $2$. In others, this automorphism sent the $A$-wing cycle to a cycle $\left(A_{0},B_{0},C_{0},B_{-c},A_{b-c},B_{b-c},\ldots,B_{-b}\right)$, characteristic of the graphs in family 3. However, only the five graphs in this family appeared to have an automorphism sending the $A$-wing cycle to a cycle
\[\left(A_{0},B_{0},C_{c},C_{c+d},B_{c+d},A_{b+c+d},\ldots,B_{-b}\right).\]
It would be interesting to discover a deeper reason as to why this particular property should ultimately disqualify all but a handful of arc-transitive propeller graphs.
\end{remark}

\section{Classification of Edge-Transitive Propeller Graphs}
\label{S:Classification}

We now turn to the classification of edge-transitive propeller graphs. For the rest of this paper, we assume that $\Gamma = \Pr_{n}\left(b,c,d\right)$ is an edge-transitive propeller graph with $G = Aut\left(\Gamma\right)$ and girth $g$. By Corollary \ref{Cor:ETimpliesAT}, $\Gamma$ is arc-transitive also. As $\Gamma$ is arc-transitive, every arc is contained in the same number of $3$-cycles, the same number of $4$-cycles, and so on. We use the following definition for clarity.

\begin{defn}
Let $\Gamma$ be a propeller graph, and let $N_3$, $N_4$, $N_5$, and $N_6$ denote the number of $3$-, $4$-, $5$-, and $6$-cycles of $\Gamma$, respectively.
\end{defn}

Note that because of the existence of canonical $6$-cycles, $N_6 \ge 2$ for any propeller graph. For the classification proofs, it will help us to consider only ``sufficiently large'' graphs, so by using the computer algebra program SAGE \cite{SAGE}, we can see that the following lemma holds.
\begin{lemma}
\label{Lem:lessthan78}
Let $\Gamma = \Pr_{n}\left(b,c,d\right)$ be an edge-transitive propeller graph where $n\le 78$. Then, up to isomorphism, $\Gamma$ is a member of family $1$, family $2$, family $3$, or family $4$.
\end{lemma}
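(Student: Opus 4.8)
The plan is to verify Lemma~\ref{Lem:lessthan78} by an exhaustive but carefully bounded computer search, organized so that the search space is provably finite and small enough to enumerate in SAGE. First I would observe that the parameters $b,c,d$ may be restricted to a fundamental domain under the isomorphisms of Lemmas~\ref{Lem:Isom1} and~\ref{Lem:Isom2}: replacing $b$ by $-b$, $c$ by $-c$, and (when $(d,n)=1$) cyclically permuting the roles of the three circulant layers via $d\mapsto e$. This lets me assume, say, $0<b\le n/2$ and $0<c\le n/2$, cutting the raw triple count roughly by a factor of four. For each $n$ from $3$ to $78$ and each surviving triple $(b,c,d)$ with $d\ne n/2$, I would build the graph $\Pr_n(b,c,d)$ explicitly, discard the disconnected ones, and test edge-transitivity by computing the orbits of $\mathrm{Aut}(\Gamma)$ on edges (equivalently, by Corollary~\ref{Cor:ETimpliesAT} and Lemma~\ref{Lem:WingtoFlat}, by checking for an automorphism carrying $(A_0,A_1)$ to $(A_0,B_0)$).

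Next, for every graph that passes the edge-transitivity test, I would confirm membership in one of the four families. The clean way to do this is \emph{not} to pattern-match parameters but to search directly for the relevant defining automorphism: check whether $\Gamma$ admits $\sigma_1$, $\sigma_2$, $\sigma_3$, or $\sigma_4$ (or rather, any automorphism behaving as these do on the $A$-wing cycle), which by the discussion in Section~\ref{S:Families} is equivalent to lying in family~$1$, $2$, $3$, or $4$ respectively. Since $|\mathrm{Aut}(\Gamma)|$ is already being computed for the edge-transitivity test, it is cheap to also list its elements and test this condition. Alternatively one can normalize $(b,c,d)$ into the canonical parameter form of each family using the isomorphism lemmas and compare; I would run both as a cross-check. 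The output of the search is then simply the assertion that the set of edge-transitive propeller graphs with $n\le 78$, taken up to isomorphism, coincides with the union of the four tabulated families restricted to that range — in particular producing exactly the five sporadic graphs $\Pr_5(1,2,2)$, $\Pr_{10}(1,2,2)$, $\Pr_{10}(1,7,7)$, $\Pr_{10}(6,2,7)$, $\Pr_{10}(6,7,2)$ of family~$4$ together with the small members of families $1$, $2$, $3$.

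The main obstacle is not conceptual but one of \emph{trust and reproducibility}: the lemma is stated as a computer result, so the proof must make the search both correct and independently checkable. Concretely, the delicate points are (i) ensuring the isomorphism reductions above are applied \emph{soundly}, so that no edge-transitive graph is missed by over-aggressive pruning — the safest route is actually to enumerate \emph{all} triples without pruning for the edge-transitivity test, and use the isomorphism lemmas only afterward to collapse the list of hits into isomorphism classes; (ii) making sure SAGE's automorphism-group computation is applied to the correct (simple, connected) graph and that the edge-orbit count is read off correctly; and (iii) handling the degenerate or coincidental cases — e.g. $\Pr_4(2,2,1)\cong W(6,2)$ with its huge automorphism group, and $\Pr_{10}(6,2,7)=\Pr_{10}(6,2,3)$ sitting in two families — so they are counted once and classified consistently. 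I would therefore present the proof as: a short justification of the reduction lemmas, a precise description of the finite search (ranges of $n$, $b$, $c$, $d$; the connectivity and edge-transitivity filters; the family-membership test), and a statement that executing this in SAGE yields precisely the claimed list, with the code included or referenced so the reader can rerun it.
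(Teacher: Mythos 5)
Your proposal matches the paper's approach: the paper offers no written argument for this lemma, justifying it solely by an exhaustive SAGE computation over all propeller graphs with $n\le 78$, which is exactly the search you describe (your added detail on isomorphism pruning, the edge-transitivity test via Lemma \ref{Lem:WingtoFlat}, and family-membership verification is a reasonable elaboration of how such a computation would be organized). Since the paper's "proof" is the computation itself, there is nothing further to compare.
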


Over the next three sections, we show that edge-transitive propeller graphs must be members of the families from Section \ref{S:Families}. Lemma \ref{Lem:Girth} implies that all propeller graphs have girth at most $6$, so we tackle the classification in cases: The $g=3$ case will be handled in Section \ref{S:g=3}, the $g=4$ case will be handled in Section \ref{S:g=4}, and the $g\ge 5$ case will be handled in Section \ref{S:g>4}.

\section{Where $g = 3$}
\label{S:g=3}

\begin{thm}Let $\Gamma$ be an edge-transitive propeller graph of girth $3$. Then $\Gamma$ is isomorphic to the line graph of $GP\left(n,d\right)$, a $2$-arc-transitive generalized Petersen graph.
\end{thm}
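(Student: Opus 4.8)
The plan is to use arc-transitivity to make the triangle structure of $\Gamma$ completely rigid, to recognise $\Gamma$ as the line graph of a cubic graph via a Krausz-type triangle partition, to identify that cubic graph as $GP(n,d)$, and finally to transfer arc-transitivity of $\Gamma$ to $2$-arc-transitivity of $GP(n,d)$. First I would locate every triangle of $\Gamma$. Since a propeller graph has no $B$--$B$ edges and no $A$--$C$ edges, the three vertices of any triangle lie among the $A$'s and $B$'s or among the $B$'s and $C$'s, and --- assuming $n>3$, the finitely many graphs with $n\le 78$ being handled by Lemma \ref{Lem:lessthan78} --- a short case check shows that an $A$-wing $\{A_i,A_{i+1}\}$ lies in a triangle only when $b\EQ\pm1$, in which case the triangle is unique. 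Since $\Gamma$ is edge-transitive, hence arc-transitive by Corollary \ref{Cor:ETimpliesAT}, and has girth $3$, every edge lies in at least one triangle; applying this to $A$-wings forces $b\EQ\pm1$, and after replacing $\Gamma$ by the isomorphic $\Pr_{n}(-b,c,d)$ if necessary (Lemma \ref{Lem:Isom1}, which fixes $d$) we may assume $b=1$. Then the only triangle through $\{A_i,A_{i+1}\}$ is $(A_i,A_{i+1},B_i)$, so by arc-transitivity every edge of $\Gamma$ lies in exactly one triangle.

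Next I would use this to recognise $\Gamma$ as a line graph. The $6n$ edges of $\Gamma$ now split into $2n$ edge-disjoint triangles, and since a triangle through a vertex $v$ uses two of the four edges at $v$, every vertex lies in exactly two triangles; this is exactly the condition making $\Gamma$ the line graph of the cubic graph $H$ whose vertices are these $2n$ triangles, the vertex $v$ of $\Gamma$ being the edge of $H$ joining the two triangles that contain $v$. To identify $H$ I would pin down the remaining $n$ triangles: inspecting common neighbours of $B_i$ and $C_i$ shows that a $C$-flat lies in a triangle only if $c\EQ\pm d$, and then --- after replacing $c$ by $-c$ via Lemma \ref{Lem:Isom1} if needed --- $c\EQ d$, while the requirement that a $C$-wing lie in only one triangle forces $3d\NEQ0$; the triangles are then precisely $(A_i,A_{i+1},B_i)$ and $(B_i,C_i,C_{i+d})$. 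Denoting these two families by $u_i$ and $v_i$, one reads off that $A_{i+1}$ joins $u_i$ to $u_{i+1}$, that $B_i$ joins $u_i$ to $v_i$, and that $C_i$ joins $v_{i-d}$ to $v_i$; hence $H\cong GP(n,d)$ and $\Gamma\cong L(GP(n,d))$.

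It remains to see that $GP(n,d)$ is $2$-arc-transitive. For this I would use the standard fact that, for a connected simple graph $X$, the arcs of $L(X)$ correspond, $\mathrm{Aut}(X)$-equivariantly, to the $2$-arcs of $X$ --- an arc $(e,f)$ of $L(X)$ corresponding to the $2$-arc through the common vertex of the edges $e$ and $f$ --- together with Whitney's theorem that $\mathrm{Aut}(L(X))=\mathrm{Aut}(X)$ for $X=GP(n,d)$, which is connected with at least six vertices. Arc-transitivity of $\Gamma=L(GP(n,d))$ then gives $2$-arc-transitivity of $GP(n,d)$ at once.

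The step I expect to be the real obstacle is the triangle bookkeeping: arc-transitivity is so restrictive that the case analysis must be complete enough to exclude every stray triangle --- all-$A$, all-$C$, or alternative triangles on two $C$'s and a $B$ --- since even one extra triangle on a $C$-wing or $C$-flat would destroy the ``exactly one triangle per edge'' property and collapse the whole argument; extracting $b\EQ\pm1$, $c\EQ\pm d$ and $3d\NEQ0$ cleanly from these constraints is where the actual work lies.
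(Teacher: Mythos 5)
Your proposal is correct, and its skeleton matches the paper's: force the parameter congruences from girth $3$ plus edge-transitivity, identify $\Gamma$ with the line graph of $GP(n,d)$, and transfer arc-transitivity of $\Gamma$ to $2$-arc-transitivity of $GP(n,d)$. The difference is in how the middle and final steps are executed. The paper derives $b\EQ\pm1$ (from triangles on the $A$-blade $(B_0,A_b)$) and $c\EQ\pm d$, normalizes to $b\EQ-1$, $c\EQ-d$, and then simply writes out the edge set of $\mathbb{L}\left(GP(n,d)\right)$ and observes that it coincides verbatim with that of $\Pr_n(-1,-d,d)$; no clique-partition machinery and no condition $3d\NEQ 0$ are needed, since once the parameters are pinned down the two edge sets are literally equal (the condition $N_3=1$ is recorded but not essential to the identification). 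You instead reconstruct the base cubic graph abstractly via a Krausz partition (every edge in exactly one triangle, every vertex in exactly two), which is why you must also rule out stray $C$-triangles via $3d\NEQ 0$ --- a correct but avoidable piece of bookkeeping in the paper's route. Where your version is genuinely sharper is the last step: the paper asserts that arcs of $\Gamma$ correspond to $2$-arcs of $X$ and concludes $2$-arc-transitivity, which tacitly uses that every automorphism of $\mathbb{L}(X)$ is induced by one of $X$; your explicit appeal to Whitney's theorem (valid here since $GP(n,d)$ is connected, cubic, and avoids the exceptional graphs) supplies exactly the justification the paper leaves implicit. Your deferral of the degenerate $n=3$ (all-$A$ triangle) situation to the small-case lemma is harmless and consistent with how the paper itself disposes of small $n$.
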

\begin{proof}
Since $\Gamma$ has girth $3$, by looking at possible $3$-cycles containing $\left(B_{0},A_{b}\right)$ and their induced relations, we must have that $b\EQ \pm 1$, and by considering the $3$-cycles containing $\left(B_{0},C_{c}\right)$, we also determine that $c\EQ \pm d$. Moreover, we see that $N_3 = 1$. From Lemma \ref{Lem:Isom1} we may assume without loss of generality that $b\EQ -1$ and $c\EQ -d$.

Now consider the generalized Petersen graph $X = GP\left(n,d\right)$. According to the construction given in \cite{GroupsGenPet}, its vertex set has $2n$ vertices, labeled $u_{i}$ and $v_{i}$ for $i\in\mathbb{Z}_{n}$, and its edge set consists of $3$ types of edges, with $n$ of each type: $\{u_{i},u_{i+1}\}$, $\{u_{i},v_{i}\}$, and $\{v_{i},v_{i+d}\}$, for $i\in\mathbb{Z}_{n}$. The line graph of $X$, written $\mathbb{L}\left(X\right)$, has vertex set $E(X)$, and two edges of $X$ are adjacent in $\mathbb{L}\left(X\right)$ if and only if they share a vertex in $X$. If we write the vertices of $\mathbb{L}\left(X\right)$ as $A_{i}$, $B_{i}$, and $C_{i}$ (for $i\in\mathbb{Z}_n$), corresponding respectively to $\{u_{i},u_{i+1}\}$, $\{u_{i},v_{i}\}$, and $\{v_{i},v_{i+d}\}$, then the edge set of $\mathbb{L}\left(X\right)$ has six types of edges: $\{A_{i},A_{i+1}\}$, $\{A_{i},B_{i}\}$, $\{B_{i},A_{i-1}\}$, $\{B_{i},C_{i-d}\}$, $\{C_{i},B_{i}\}$, and $\{C_{i},C_{i+d}\}$.

This makes it apparent that $\mathbb{L}\left(X\right) = \Gamma$. The arcs in $\Gamma$ correspond directly to the $2$-arcs in $X$, and as $\Gamma$ is arc-transitive, $X$ must be $2$-arc-transitive.

\begin{table}[h]
\centering
\begin{tabular}{|r|l|}
\hline
\textbf{$GP\left(n,r\right)$} & \textbf{$\Pr_{n}\left(b,c,d\right)$}\\
\hline
$GP\left(4,1\right)$ & $\Pr_{4}\left(1,1,1\right)$\\
$GP\left(5,2\right)$ & $\Pr_{5}\left(1,2,2\right)$\\
$GP\left(8,3\right)$ & $\Pr_{8}\left(1,3,3\right)$\\
$GP\left(10,2\right)$ & $\Pr_{10}\left(1,2,2\right)$\\
$GP\left(10,3\right)$ & $\Pr_{10}\left(1,3,3\right)$\\
$GP\left(12,5\right)$ & $\Pr_{12}\left(1,5,5\right)$\\
$GP\left(24,5\right)$ & $\Pr_{24}\left(1,5,5\right)$\\
\hline
\end{tabular}
\caption{The arc-transitive gen. Petersen graphs and their corresponding propeller graphs.}
\label{Tab:Girth3}
\end{table}

\end{proof}

Of course, the $2$-arc-transitive generalized Petersen graphs are well known to be the seven classified in \cite{GroupsGenPet}, which in particular have $n\le 24$. Therefore by Lemma \ref{Lem:lessthan78}, arc-transitive propeller graphs of girth $3$ are members of the families from Section \ref{S:Families}. In Table \ref{Tab:Girth3} we see the arc-transitive generalized Petersen graphs and their corresponding propeller graphs. We remark that the propeller graphs in Table \ref{Tab:Girth3} for which $n$ is a multiple of $4$ are isomorphic to members of family $3$, while the others are isomorphic to members of family $4$.

\section{Where $g = 4$}
\label{S:g=4}

In this section, we classify the arc-transitive propeller graphs of girth $4$.

\begin{theorem}Let $\Gamma$ be an arc-transitive propeller graph of girth $4$. Then $\Gamma$ is isomorphic to a graph of the form $\Pr_{2m}\left(2,2,d\right)$ with $d\in\{1,m+1\}$.\end{theorem}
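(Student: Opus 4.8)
The plan is to run the classification entirely through the canonical $6$-cycles and the count $N_4$ of $4$-cycles through an arc. Since $g = 4$, every arc of $\Gamma$ lies in exactly $N_4 \ge 1$ four-cycles, and $N_4$ is the same for every arc by arc-transitivity. First I would enumerate, up to the dihedral symmetry generated by $\rho$ and $\mu$, all potential $4$-cycles in a propeller graph: a $4$-cycle is a closed walk of length $4$, so I would list the possible edge-type patterns (A-wing, A-flat, A-blade, C-blade, C-flat, C-wing) around such a cycle and read off the congruence each pattern forces on $b$, $c$, $d$, $n$. For instance, an A-wing/A-flat/A-blade/A-flat square $\left(A_{0}, A_{1}, B_{1}, A_{1-b}, \ldots\right)$ closes iff $1 - b \equiv \pm 1$; a pure-$A$ square $\left(A_0, A_1, A_2, A_1\right)$ is degenerate; a C-wing square forces $2d \equiv 0$, which is excluded since $d \ne n/2$ and $d \ne 0$; and mixed squares using a B-vertex as the "hinge" between an A-spoke and a C-spoke, e.g. $\left(A_0, B_0, C_c, B_{?}, A_0\right)$, force relations among $b$, $c$, and $d$. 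This produces a finite menu of possibilities.

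**Reducing via normalizations.** Next I would combine the enumeration with Lemmas~\ref{Lem:Isom1} and~\ref{Lem:Isom2} to cut down cases. The girth being exactly $4$ (not $3$) rules out $b \equiv \pm 1$ and $c \equiv \pm d$ by the $g=3$ analysis in Section~\ref{S:g=3}. So the only $4$-cycles available are those forced by the relations $b \equiv 2$ (or $b \equiv -2$, equivalent by Lemma~\ref{Lem:Isom1}) and $c \equiv 2$ (or $-2$), arising from squares of the form $\left(A_0, A_1, B_1, A_{1-b}, A_{-b}=A_0\right)$ when $b \equiv 2$ and the corresponding $C$-side square when $c \equiv 2$. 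Arc-transitivity then forces the arc $\left(B_0, A_b\right)$ to lie in as many $4$-cycles as the arc $\left(A_0, A_1\right)$, which will pin down that \emph{both} $b \equiv 2$ and $c \equiv 2$ must hold (one of them alone gives an arc in fewer $4$-cycles), i.e.\ $\Gamma \cong \Pr_{n}\left(2, 2, d\right)$; and requiring $n$ even, so $n = 2m$, will come out of the constraint $b = 2$ together with $\gcd$-type conditions needed for connectedness or from the $4$-cycle count itself.

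**Pinning down $d$.** Having reduced to $\Pr_{2m}(2,2,d)$, the last step is to determine which values of $d$ give an arc-transitive graph. Here I would invoke Lemma~\ref{Lem:WingtoFlat}: $\Gamma$ is edge-transitive iff it admits an automorphism $\delta$ sending $\left(A_0, A_1\right)$ to $\left(A_0, B_0\right)$. Tracking where $\delta$ must send the canonical $6$-cycles and the local $4$-cycles through each arc — in particular matching the $4$-cycle $\left(A_0, A_1, B_1, A_{-1}\right)$ (using $b = 2$) with a $4$-cycle through $\left(A_0, B_0\right)$, which must be a $C$-side or mixed square — forces a congruence on $d$. I expect this to collapse to $2d \equiv 2 \pmod{2m}$, i.e.\ $d \equiv 1$ or $d \equiv m+1$, exactly the two solutions in $\mathbb{Z}_{2m}$. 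Conversely one checks these are realized: $\Pr_{2m}(2,2,1)$ and $\Pr_{2m}(2,2,m+1)$ both admit such a $\delta$ (indeed $\Pr_{2m}(2,2,1) = \Pr_{2m}(2d,2,d)$ with $d=1$, $d^2 \equiv 1$, a member of family~$1$, and the $m+1$ case likewise satisfies $d^2 = (m+1)^2 \equiv m^2 + 2m + 1 \equiv 1 \pmod{2m}$ when $m$ is even, or falls into family~$1$ or~$2$ appropriately).

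**Main obstacle.** The hard part will be the bookkeeping in the $4$-cycle enumeration: making sure the list of edge-type patterns around a $4$-cycle is genuinely exhaustive (including degenerate ones that collapse to shorter closed walks and must be discarded) and correctly translating each into a congruence, then showing arc-transitivity forces $N_4$ to be constant across the three orbit-representatives $\left(A_0,A_1\right)$, $\left(B_0,A_b\right)$, $\left(B_0,C_c\right)$ in a way that rules out $b \equiv \pm 2$ with $c \not\equiv \pm 2$ and vice versa. Once the "$b = c = 2$" reduction is in hand, the determination of $d$ via Lemma~\ref{Lem:WingtoFlat} should be comparatively short.
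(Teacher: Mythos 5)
Your overall strategy (enumerate the possible $4$-cycles, use arc-transitivity to force every arc into the same number of them, normalize with Lemmas \ref{Lem:Isom1}--\ref{Lem:Isom2}) matches the paper's first half, but there are two genuine gaps. First, your list of orbit representatives to equalize, $\left(A_{0},A_{1}\right)$, $\left(B_{0},A_{b}\right)$, $\left(B_{0},C_{c}\right)$, omits the $C$-wing $\left(C_{0},C_{d}\right)$, and that is exactly where the condition on $d$ comes from: the $4$-cycles through a $C$-wing require $c\EQ\pm 2d$ (or $4d\EQ 0$ for the pure $C$-wing square --- note a $C$-wing square closes under $4d\EQ 0$, not $2d\EQ 0$ as you claim), so with $c\EQ\pm 2$ one gets $2d\EQ\pm 2$ directly from the counting. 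Deferring this to an ``expected collapse'' during automorphism-tracking is not a proof, although it is repairable by the same counting you already use. You also do not address the extremal counts (the paper shows $N_{4}\in\{1,2,5\}$, with $N_{4}\in\{1,5\}$ forcing $n=4$, settled by computer search), which your enumeration would need to handle.

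The more serious gap is the evenness of $n$. You assert that $n=2m$ ``will come out of the constraint $b=2$ together with $\gcd$-type conditions needed for connectedness or from the $4$-cycle count itself.'' This is false in the critical subcase $d\EQ 1$: the graph $\Pr_{n}\left(2,2,1\right)$ with $n$ odd is connected, has girth $4$, and has every arc in exactly two $4$-cycles, so no local counting or connectedness argument distinguishes it. (When $d\EQ m+1$ evenness is automatic from $2d\EQ 2$, $d\NEQ 1$; when $d\EQ 1$ it is not.) The paper's proof spends its main effort precisely here: it shows that the images of the two $4$-cycles through $\left(A_{0},A_{1}\right)$ under a putative automorphism $\sigma$ with $\left(A_{0},A_{1}\right)\sigma=\left(A_{0},B_{0}\right)$, together with the rigidity coming from common-neighborhood identities such as $N(B_{i})\cap N(A_{i+1})=\{A_{i},A_{i+2}\}$, determine $\sigma$ completely from the images of five vertices, eliminate one branch by a contradiction, and force the surviving $\sigma$ to have a $2$-periodic form that can only exist when $n$ is even. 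Without an argument of this kind your proof does not exclude odd-$n$ graphs $\Pr_{n}\left(2,2,1\right)$ from being arc-transitive, so the stated conclusion $\Gamma\cong\Pr_{2m}\left(2,2,d\right)$ is not established.
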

\begin{proof}
A simple analysis of all possible $4$-cycles containing $\left(A_{0},A_{1}\right)$ shows that $\left(A_{0},A_{1}\right)$ can be in at most five $4$-cycles. Moreover, if $\left(A_{0},A_{1}\right)$ is in at least three $4$-cycles, it must be in all five; hence $N_{4}\in\{1,2,5\}$. If $N_{4} = 1$, or $N_{4} = 5$, then $n = 4$, and a simple SAGE search \cite{SAGE} shows that the only arc-transitive propeller graph with $n=4$ and $g=4$ is $\Pr_{4}\left(2,2,1\right)$.

However, if $N_{4} = 2$, then $b\EQ \pm 2$, $c\EQ \pm 2$, and $2d\EQ \pm 2$. By our isomorphism theorems, we may take $b\EQ 2$, $c\EQ 2$, and $2d\EQ 2$, in which case either $d\EQ 1$, or $n = 2m$ and $d\EQ m+1$. If $n = 2m$ and $d\EQ m+1$, we are done. Therefore, we proceed assuming that $d\EQ 1$, and show that $n$ must be even. By arc-transitivity, we know that there is an automorphism $\sigma$ such that $\left(A_{0},A_{1}\right)\sigma = \left(A_{0},B_{0}\right)$. The two $4$-cycles $\left(A_{0},A_{1},A_{2},B_{0}\right)$ and $\left(A_{0},A_{1},B_{-1},A_{-1}\right)$ are sent to the $4$-cycles $\left(A_{0},B_{0},A_{2},A_{1}\right)$ and $\left(A_{0},B_{0},C_{0},B_{-2}\right)$ in some fashion. Therefore $\{A_{2},B_{-1}\}\sigma = \{A_{2},C_{0}\}$ and $\{B_{0},A_{-1}\}\sigma = \{A_{1},B_{-2}\}$.

Observe that for a given $i$,
\begin{align*}
N(B_{i}) \cap N(A_{i+1}) &= \{A_{i},A_{i+2}\}, \\
N(B_{i}) \cap N(C_{i+1}) &= \{C_{i},C_{i+2}\}, \text{ and} \\
N(A_{i+2}) \cap N(C_{i+2}) &= \{B_{i},B_{i+2}\}.
\end{align*}
This implies that the full action of $\sigma$ is entirely determined by where it sends $A_{0}$, $A_{1}$, $B_{0}$, $C_{0}$, and $C_{1}$.

If $A_{2}\sigma = C_{0}$, then $B_{0}\sigma = B_{-2}$ and in fact \[\left(A_{0},A_{1},B_{1},A_{3},A_{2},B_{0}\right)\sigma = \left(A_{0},B_{0},C_{2},C_{1},C_{0},B_{-2}\right).\] We also have that $B_{-1}\sigma = A_{2}$ and $A_{-1}\sigma = A_{1}$, and therefore \[\left(A_{0},A_{1},B_{-1},A_{-1},A_{-2},B_{-2}\right)\sigma = \left(A_{0},B_{0},A_{2},A_{1},B_{-1},A_{-1}\right).\] This allows us to deduce that \[\left(B_{-2},A_{0},A_{1},B_{1},C_{1},C_{0}\right)\sigma = \left(A_{-1},A_{0},B_{0},C_{2},C_{1},B_{-1}\right).\] However, the arc $\left(B_{0},C_{0}\right)$ is clearly not preserved by $\sigma$, so $A_{2}\sigma \ne C_{0}$. Hence $A_{2}\sigma = A_{2}$, which yields a whole host of deductions:
\begin{align*}
	\left(A_{0},A_{1},B_{1},A_{3},A_{2},B_{0}\right)\sigma = \left(A_{0},B_{0},C_{2},B_{2},A_{2},A_{1}\right) \\
	\left(A_{0},A_{1},B_{-1},A_{-1},A_{-2},B_{-2}\right)\sigma = \left(A_{0},B_{0},C_{0},B_{-2},A_{-2},A_{-1}\right) \\
	\left(B_{-2},A_{0},A_{1},B_{1},C_{1},C_{0}\right)\sigma = \left(A_{-1},A_{0},B_{0},C_{2},C_{1},B_{-1}\right)
\end{align*}

This delivers the condition necessary to determine the full action of $\sigma$, shown below.
\[
\begin{tabular}{r|ll}
\multicolumn{1}{c}{} & $i\overset{2}{\equiv}0$ & $i\overset{2}{\equiv}1$\tabularnewline
\cline{2-3}
$A_{i}\mapsto$ & $A_{i}$ & $B_{i-1}$\tabularnewline
$B_{i}\mapsto$ & $A_{i+1}$ & $C_{i+1}$\tabularnewline
$C_{i}\mapsto$ & $B_{i-1}$ & $C_{i}$\tabularnewline
\end{tabular}
\]

It is immediate that this requires $n = 2m$ for some $m$. (In fact, this is the defining automorphism for family $1$ with $d\EQ 1$.)
\end{proof}

The classification of edge-transitive propeller graphs with girth $4$ follows immediately; namely, they are members of family $1$.

\section{Where $g \ge 5$}
\label{S:g>4}

We begin with a definition which streamlines some later proofs considerably.

\begin{defn}Let $\Gamma$ be an edge-transitive propeller graph of girth at least $5$, with $\left(u,v\right)$ an arc of $\Gamma$. Label the three remaining non-$u$ neighbors of $v$ as $w,x,y$, and the three remaining non-$v$ neighbors of $u$ as $r,s,t$. We define the \emph{successor type} of $\left(u,v\right)$ to be the multiset $\{a_{1},a_{2},a_{3}\}$, where $a_{1}$, $a_{2}$, and $a_{3}$ are the numbers of $6$-cycles containing $\left(u,v,w\right)$, $\left(u,v,x\right)$, and $\left(u,v,y\right)$, respectively. We similarly define the \emph{predecessor type} of $\left(u,v\right)$ to be the multiset $\{b_{1},b_{2},b_{3}\}$, where $b_{1}$, $b_{2}$, and $b_{3}$ are the numbers of $6$-cycles containing $\left(r,u,v\right)$, $\left(s,u,v\right)$, and $\left(t,u,v\right)$, respectively. As an aside, there are many occasions where $i\ne j$ but $a_{i} = a_{j}$ or $b_{i} = b_{j}$ for some $i,j$, and it is for this reason that we define types as multisets rather than merely sets.\end{defn}

A couple of things are readily apparent. First,
\[a_{1}+a_{2}+a_{3} = N_{6} = b_{1}+b_{2}+b_{3}.\]
Also, automorphisms of the graph preserve types, so for an edge-transitive propeller graph, every arc must have the same successor type and the same predecessor type. The next lemma establishes a lower bound of $N_{6} \ge 3$ for edge-transitive propeller graphs, and provides an example illustrating the previous definition.
\begin{lemma}There exists no arc-transitive propeller graph for which $N_{6} = 2$.\end{lemma}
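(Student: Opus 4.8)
The plan is to argue by contradiction: suppose $\Gamma$ is an arc-transitive propeller graph with $N_6 = 2$. We know from Lemma~\ref{Lem:Girth} that every propeller graph has the two canonical $6$-cycles through each arc of the form $(A_0,A_1)$, so if $N_6 = 2$ those must be \emph{all} the $6$-cycles through $(A_0,A_1)$, and by arc-transitivity all the $6$-cycles through every arc. The two canonical $6$-cycles through $(A_0,A_1)$ are the $A$-canonical cycle $(A_0,A_1,B_1,A_{1+b},A_b,B_0)$ (which exists when $b \not\equiv \pm 1$) and the $A$-canonical cycle through $A_{-1}$, namely $(A_{-1},A_0,A_1,B_0,\dots)$ — more precisely I would enumerate the canonical $6$-cycles incident with the arc $(A_0,A_1)$ and those incident with the arc $(A_0,B_0)$, using the explicit forms given in the proof of Lemma~\ref{Lem:Girth}. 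The key tension is that $g \ge 5$ forces $b \not\equiv \pm 1$ and $c \not\equiv \pm d$ (else we would have girth $\le 4$ as in the $g=3,4$ analyses), so all the $A$-canonical and $C$-canonical cycles are genuine $6$-cycles, and I will count how many of each pass through a fixed $A$-wing versus a fixed $A$-spoke versus a fixed $C$-spoke.

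The heart of the argument is a discrepancy in these counts. An $A$-wing $\{A_i,A_{i+1}\}$ lies in exactly two $A$-canonical $6$-cycles (one ``forward'', one ``backward''), and in no $C$-canonical cycle, so if $N_6 = 2$ then it lies in \emph{no other} $6$-cycle at all. On the other hand, by Lemma~\ref{Lem:WingtoFlat} arc-transitivity gives an automorphism sending $(A_0,A_1)$ to $(A_0,B_0)$, so the $A$-spoke $\{A_0,B_0\}$ must also lie in exactly two $6$-cycles — but $\{A_0,B_0\}$ automatically lies in the $A$-canonical $6$-cycle $(A_0,A_1,B_1,A_{1+b},A_b,B_0)$ and also in $(A_{-1},A_0,B_0,\dots)$-type canonical cycles; the point is to show $\{A_0,B_0\}$ is forced into at least three distinct $6$-cycles (for instance, canonical cycles associated to two different arcs out of $A_0$ plus possibly a $C$-canonical-adjacent cycle), contradicting $N_6=2$. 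Concretely I would show the arc $(A_0,B_0)$ has successor type with entries summing to $2$, list the $6$-cycles through $(A_0,B_0,C_0)$, $(A_0,B_0,C_c)$, $(A_0,B_0,A_b)$ and through $(A_{-1},A_0,B_0)$, $(B_{-b},A_0,B_0)$, $(A_1,A_0,B_0)$, and check the arithmetic constraints $N_6 = 2$ would impose are incompatible — most likely one is forced to conclude $A_{1+b} = A_{-1}$ or a similar coincidence that collapses the girth below $5$.

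A cleaner route, which I would try first, is a parity/counting argument on the whole graph: the number of $6$-cycles $N_6$ satisfies $4n \cdot N_6 = 6 \cdot (\text{total number of }6\text{-cycles})\cdot(\text{something})$ — more carefully, double-counting incidences between arcs and $6$-cycles gives $(\text{number of }6\text{-cycles in }\Gamma) \cdot 12 = (\text{number of arcs}) \cdot N_6 = 12n \cdot N_6$, so the number of $6$-cycles equals $n N_6$. With $N_6 = 2$ there are exactly $2n$ six-cycles. But the $A$-canonical family already contributes $n$ distinct $6$-cycles (one for each $i \in \mathbb{Z}_n$, distinct because $b \not\equiv \pm 1$ so none degenerates and they are pairwise distinct unless there is a further coincidence forcing small $n$), and the $C$-canonical family contributes another $n$ distinct ones; I then need to verify these two families of $n$ cycles each are genuinely disjoint from each other and internally non-repeating, which would exactly account for all $2n$ cycles and leave \emph{no} $6$-cycle through an $A$-wing other than its two $A$-canonical ones — then the contradiction is that the $C$-spoke $\{B_0,C_c\}$, which is in the same edge-orbit as an $A$-wing, must then also lie in exactly two $6$-cycles, yet it lies in a $C$-canonical cycle and, because it is a spoke not a wing, its two incident canonical cycles plus the structure around $B_0$ force a third.

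The main obstacle I anticipate is the bookkeeping of \emph{distinctness}: ensuring the $A$-canonical and $C$-canonical $6$-cycles are all distinct and that no ``accidental'' $6$-cycle (one not of canonical form, arising from a relation like $2b \equiv 0$, $b + c \equiv d$, $2d \equiv b$, etc.) sneaks in — such accidental cycles are precisely what the $g \ge 5$ hypothesis and the eventual family classification must rule out, and handling them is delicate because each coincidence among the parameters $b,c,d,n$ spawns a small sporadic case. I would organize this by first reducing (via Lemma~\ref{Lem:lessthan78}) to $n > 78$, so that small sporadic coincidences cannot occur, and then showing that for $n$ large the canonical cycles are automatically distinct and exhaustive under $N_6 = 2$, delivering the contradiction. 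If a residual finite set of parameter tuples survives the general argument, they are dispatched by the SAGE computation underlying Lemma~\ref{Lem:lessthan78}.
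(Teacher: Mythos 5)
There is a genuine gap: the contradictions your two routes aim for do not exist, because the quantity you propose to contradict — the number of $6$-cycles through an edge, or through the whole graph — is perfectly consistent when $N_6=2$. If $N_6=2$ then indeed every arc lies only in canonical $6$-cycles, but this produces no discrepancy in counts per edge: the $A$-spoke $\{A_0,B_0\}$ lies in exactly the two $A$-canonical cycles indexed by $i=0$ and $i=-1$, namely $(A_0,A_1,B_1,A_{1+b},A_b,B_0)$ and $(A_{-1},A_0,B_0,A_b,A_{-1+b},B_{-1})$, and each $C$-spoke lies in exactly two $C$-canonical cycles; no edge is ``forced into at least three distinct $6$-cycles.'' Likewise your global double count closes up with no contradiction: with $N_6=2$ there are $2n$ six-cycles, and the $n$ $A$-canonical plus $n$ $C$-canonical cycles account for them exactly. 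So both the ``third cycle through the $A$-spoke'' route and the ``$C$-spoke forces a third'' route fail; counting $6$-cycles through edges or arcs gives $2$ everywhere.

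The contradiction lives one level deeper, in how those two cycles are distributed over the $2$-arcs extending a given arc — the successor type, which you name but never compute. The two canonical $6$-cycles through $(A_0,A_1)$ are the $i=0$ cycle above and the $i=-b$ cycle $(A_{-b},A_{1-b},B_{1-b},A_1,A_0,B_{-b})$ (not ``$(A_{-1},A_0,A_1,B_0,\dots)$'' as you write; no canonical cycle contains the $2$-arc $(A_{-1},A_0,A_1)$), and they continue through the distinct neighbors $B_1$ and $B_{1-b}$ of $A_1$, so the successor type of $(A_0,A_1)$ is $\{1,1,0\}$. By contrast, both canonical $6$-cycles through $(A_0,B_0)$ listed above continue through the same neighbor $A_b$ of $B_0$, so its successor type is $\{2,0,0\}$. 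Since automorphisms preserve successor types, no automorphism can send $(A_0,A_1)$ to $(A_0,B_0)$, which contradicts arc-transitivity. That two-sentence argument is the paper's entire proof; it needs no girth hypothesis, no reduction to $n>78$, and no SAGE computation, and it is not the mechanism you predict (nothing forces $A_{1+b}=A_{-1}$ or a collapse of the girth).
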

\begin{proof}
Let $\Pr_{n}\left(b,c,d\right)$ be edge-transitive with $N_{6} = 2$; then every arc is in only canonical $6$-cycles. One of the canonical $6$-cycles containing $\left(A_{0},A_{1}\right)$ contains the $2$-arc $\left(A_{0},A_{1},B_{1}\right)$ and the other contains the $2$-arc $\left(A_{0},A_{1},B_{1-b}\right)$, so the successor type of $\left(A_{0},A_{1}\right)$ is $\{1,1\}$. Conversely, both canonical $6$-cycles containing $\left(A_{0},B_{0}\right)$ contain the $2$-arc $\left(A_{0},B_{0},A_{b}\right)$, so the successor type of $\left(A_{0},B_{0}\right)$ is $\{2,0\}$. Clearly there can be no automorphism sending $\left(A_{0},A_{1}\right)$ to $\left(A_{0},B_{0}\right)$, contradicting our assumption that the graph is arc-transitive.

\end{proof}

It will help to know precisely which, and how many, $6$-cycles are admitted by a particular propeller graph, as every non-canonical $6$-cycle admitted by $\Gamma$ imposes additional conditions on $n$, $b$, $c$, and $d$. For $\mathcal{X}$ an arbitrary cycle in $\Gamma$, let $q(\mathcal{X})$, $r(\mathcal{X})$, $s(\mathcal{X})$, $t(\mathcal{X})$, $u(\mathcal{X})$, and $v(\mathcal{X})$ be, respectively, the number of $A$-wings, $A$-flats, $A$-blades, $C$-blades, $C$-flats, and $C$-wings contained in $\mathcal{X}$. As an example, if $\mathcal{X}=\left(A_{0},A_{1},B_{1},A_{1+b},A_{b},B_{0}\right)$, we have $q(\mathcal{X})=r(\mathcal{X})=s(\mathcal{X})=2$ and $t(\mathcal{X})=u(\mathcal{X})=v(\mathcal{X})=0$.

We say that two $6$-cycles are \emph{of the same class} if there is an automorphism from $\left<\rho,\mu\right>\le G$ sending one onto the other. Note that if $\mathcal{X}$ and $\mathcal{Y}$ are of the same class, then $q(\mathcal{X})=q(\mathcal{Y})$, $r(\mathcal{X})=s(\mathcal{Y})$, $s(\mathcal{X})=r(\mathcal{Y})$, $t(\mathcal{X})=u(\mathcal{Y})$, $u(\mathcal{X})=t(\mathcal{Y})$, and $v(\mathcal{X})=v(\mathcal{Y})$. In Table \ref{Tab:6CycRels}, found in the Appendix, we list a representative $6$-cycle $\mathcal{X}$ of each class, conditions on $n$, $b$, $c$, and $d$ for $\mathcal{X}$ to be admitted by $\Gamma$, and values for $q(\mathcal{X})$, $r(\mathcal{X})$, $s(\mathcal{X})$, $t(\mathcal{X})$, $u(\mathcal{X})$, and $v(\mathcal{X})$ when $\mathcal{X}$ is of the given class.

\begin{remark}We are seeking which combinations of relations from Table \ref{Tab:6CycRels} yield potentially edge-transitive graphs. There are of course a terrific number of possible combinations to start with, but we are able to reduce the possibilities considerably in three ways. First, by applying key assumptions from our claim, namely, that $g\ge 5$ and $n>78$, some relations are thereby rendered mutually exclusive. For example, taking any two relations from
\[\{b\EQ 4, b\EQ -4, 2b\EQ 2, 2b\EQ -2\}\]
will contradict $n>78$, so numbers $4$, $5$, $6$, and $7$ from Table \ref{Tab:6CycRels} are all pairwise mutually exclusive.

Second, we must have also that all edges are in the same number of $6$-cycles, which requires that the sum of the values of $q(E)$, $r(E)$, $s(E)$, $t(E)$, $u(E)$, and $v(E)$ are all the same across all representative $6$-cycles $E$. For example, if $2b\EQ 2$ and $2c\EQ 2d$, then every edge type is in five $6$-cycles (two canonical $6$-cycles and three associated with one of the relations). For this reason, a combination like $b\EQ 2c$ and $d\EQ 1$ would not be viable on its own, since edges of different types would be contained in a different number of $6$-cycles. By applying the first and second restrictions, we reduce the number of feasible combinations down to $222$ possibilities.

Finally, we utilize Lemmas \ref{Lem:Equal}, \ref{Lem:Isom1}, and \ref{Lem:Isom2} from Section \ref{S:PropGraphs} to reduce the $222$ possibilities to $31$ non-isomorphic cases, which can be found in Table \ref{Tab:31cases}. Notice also that all of these cases adhere to the upper bound $N_{6}\le 9$.
\end{remark}

\begin{table}
\[
\begin{tabular}{|l|c|l|}
\cline{1-3}
Case & $N_{6}$ & Propeller graph\tabularnewline
\cline{1-3}
$1$ & $3$ & $\Pr_{n}\left(b,c,1+b+c\right)$\tabularnewline
\cline{1-3}
$2$ & $4$ & $\Pr_{n}\left(b,c,1+b\right)$\tabularnewline
$3$ & $4$ & $\Pr_{n}\left(b,c,1+c\right)$\tabularnewline
$4$ & $4$ & $\Pr_{n}\left(b,c,1+b-c\right)$, where $2c\EQ 2$\tabularnewline
$5$ & $4$ & $\Pr_{n}\left(b,c,1+b-c\right)$ where $2b\EQ 2c$\tabularnewline
$6$ & $4$ & $\Pr_{n}\left(b,2+b,d\right)$ where $2+2b\EQ 2d$\tabularnewline
$7$ & $4$ & $\Pr_{n}\left(b,2+b,d\right)$ where $2d\EQ 2$\tabularnewline
\cline{1-3}
$8$ & $5$ & $\Pr_{n}\left(b,c,d\right)$ where $2b\EQ 2$, $2c\EQ 2d$\tabularnewline
$9$ & $5$ & $\Pr_{n}\left(b,b-2,3\right)$ where $2b\EQ 8$\tabularnewline
$10$ & $5$ & $\Pr_{n}\left(b,2+b,3\right)$ where $2b\EQ 4$\tabularnewline
$11$ & $5$ & $\Pr_{n}\left(b,b-2,2b-1\right)$ where $6b\EQ 4$\tabularnewline
$12$ & $5$ & $\Pr_{n}\left(b,b-2,2b-3\right)$ where $6b\EQ 8$\tabularnewline
$13$ & $5$ & $\Pr_{n}\left(b,2+b,d\right)$ where $4b\EQ 0$, $2d\EQ 2$\tabularnewline
$14$ & $5$ & $\Pr_{n}\left(b,b-2,d\right)$ where $4b\EQ 8$, $2d\EQ 2$\tabularnewline
\cline{1-3}
$15$ & $6$ & $\Pr_{n}\left(b,c,1\right)$\tabularnewline
$16$ & $6$ & $\Pr_{n}\left(2d,2,d\right)$\tabularnewline
$17$ & $6$ & $\Pr_{n}\left(b,c,1+b-c\right)$ where $2b\EQ 2$, $2c\EQ 2d$\tabularnewline
$18$ & $6$ & $\Pr_{n}\left(b,b-2,2b-1\right)$ where $4b\EQ 4$\tabularnewline
$19$ & $6$ & $\Pr_{n}\left(b,2+b,b-1\right)$ where $4b\EQ 0$\tabularnewline
$20$ & $6$ & $\Pr_{n}\left(b,2+b,b-1\right)$ where $2b\EQ 4$\tabularnewline
$21$ & $6$ & $\Pr_{n}\left(b,b-2,b-3\right)$ where $4b\EQ 8$\tabularnewline
$22$ & $6$ & $\Pr_{n}\left(b,b-2,b-3\right)$ where $2b\EQ 8$\tabularnewline
\cline{1-3}
$23$ & $7$ & $\Pr_{n}\left(b,c,1-c\right)$ where $2b\EQ 2$, $4c\EQ 2$\tabularnewline
$24$ & $7$ & $\Pr_{n}\left(b,c,1-b+c\right)$ where $2b\EQ 2$\tabularnewline
$25$ & $7$ & $\Pr_{n}\left(b,c,1+b\right)$ where $2b\EQ 2$, $2c\EQ 4$\tabularnewline
$26$ & $7$ & $\Pr_{n}\left(2d,2,d\right)$ where $3d\EQ 3$\tabularnewline
\cline{1-3}
$27$ & $8$ & $\Pr_{n}\left(b,2+b,1+b\right)$\tabularnewline
\cline{1-3}
$28$ & $9$ & $\Pr_{n}\left(b,c,1\right)$ where $2b\EQ 2$, $2c\EQ 2$\tabularnewline
$29$ & $9$ & $\Pr_{n}\left(b,2+b,1\right)$\tabularnewline
$30$ & $9$ & $\Pr_{n}\left(2d,2,d\right)$ where $3d\EQ 1$\tabularnewline
$31$ & $9$ & $\Pr_{n}\left(6,2,3\right)$\tabularnewline
\cline{1-3}
\end{tabular}
\]
\caption{Possible edge-transitive propeller graphs of girth at least $5$ and $n>78$.}
\label{Tab:31cases}
\end{table}

This brings us to our next theorem, which classifies the edge-transitive propeller graphs of girth at least $5$ and $n>78$.

\begin{thm}Let $\Gamma = \Pr_{n}\left(b,c,d\right)$ be one of the graphs shown in the table above. If $\Gamma$ is edge-transitive, exactly one of the following must be true:
\begin{enumerate}[(i)]
\item $\Gamma$ is a graph from case $1$ or a graph from case $5$, and is isomorphic to a member of family $3$.
\item $\Gamma$ is a graph from case $16$, and is isomorphic to either a member of family $1$ or a member of family $2$.
\item $\Gamma$ is a graph from case $26$, and is isomorphic to a member of family $1$.
\end{enumerate}
\end{thm}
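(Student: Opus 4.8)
The plan is to work through the $31$ cases of Table~\ref{Tab:31cases} (batching cases that share a parameter pattern), always using Lemma~\ref{Lem:WingtoFlat}: if $\Gamma$ is edge-transitive there is $\sigma\in G$ with $(A_0,A_1)\sigma=(A_0,B_0)$, and any such $\sigma$ must match up, $2$-arc by $2$-arc, the number of $6$-cycles through $(A_0,A_1,\ast)$ with the number through $(A_0,B_0,\ast)$; in particular the successor type of the $A$-wing $(A_0,A_1)$ must equal the successor type of the $A$-flat $(A_0,B_0)$, and likewise for predecessor types. For each case the complete list of $6$-cycles is explicit --- the two canonical ones together with the extra cycles forced by that case's relation, as catalogued in Table~\ref{Tab:6CycRels} --- so these multisets can simply be computed and compared.

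First I would eliminate the cases that fail this numerical test. The two canonical $6$-cycles through $(A_0,A_1)$ run through the $2$-arcs $(A_0,A_1,B_1)$ and $(A_0,A_1,B_{1-b})$, so the canonical contribution to the successor type of the $A$-wing is $1+1+0$ over the three successors of $A_1$; by contrast both canonical $6$-cycles through $(A_0,B_0)$ run through $(A_0,B_0,A_b)$, giving canonical contribution $2+0+0$. Hence a case survives only if its extra $6$-cycles repair this imbalance (and the analogous predecessor imbalance) simultaneously, which constrains not just how many extra cycles there are but through which $2$-arcs they pass. Running this check against Table~\ref{Tab:6CycRels}, and using the standing hypotheses $g\ge5$ and $n>78$ to keep the tabulated relations from degenerating or coalescing, one finds that among the small-$N_6$ cases (cases $2$--$22$) the only survivors are cases $1$, $5$, and $16$, and among the large-$N_6$ cases (cases $23$--$31$) the only survivor is case $26$; in particular $\Pr_{n}(6,2,3)$ (case $31$) is killed because, for $n>78$, the $6$-cycle distribution around an $A$-wing cannot be matched to that around an $A$-flat.

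For each of the four survivors it remains to construct the promised automorphism and identify its family. Here I would reuse the mechanism from the proof in Section~\ref{S:g=4}: when $g\ge5$ any two vertices at distance $2$ have exactly one common neighbour, so intersections such as $N(B_i)\cap N(A_{i\pm1})$ and $N(B_i)\cap N(C_{i\pm c})$ are singletons, and consequently the image under $\sigma$ of a handful of vertices near $A_0$ propagates to a closed formula for $\sigma$ on all of $\Gamma$. Fixing $A_0$, sending $A_1\mapsto B_0$, and using the $6$-cycle matching to pin down the images of the remaining neighbours of $A_0$ and $A_1$ (and, where needed, of a short chain further out), I would show that the resulting permutation is an automorphism only when extra congruences hold: for case $16$ this forces $n$ even and $d^2\EQ\pm1$, splitting the case into the family~$1$ ($d^2\EQ1$) and family~$2$ ($d^2\EQ-1$) subcases with $\sigma$ equal to $\sigma_1$ or $\sigma_2$; for case $26$ it forces the family~$1$ congruences and identifies $\sigma$ with the defining automorphism there; and for cases $1$ and $5$ it forces the parameters, after an application of the isomorphism Lemmas~\ref{Lem:Equal}, \ref{Lem:Isom1}, and~\ref{Lem:Isom2}, into the shape $\Pr_{4m}(b,b-4,2b-3)$ with $b\overset{4}{\equiv}3$ and $8b\EQ16$, so that $\sigma$ becomes $\sigma_3$. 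Finally, since $N_6$ is an isomorphism invariant and takes the pairwise distinct values $3$ or $4$ (cases $1$, $5$), $6$ (case $16$), and $7$ (case $26$), no graph can lie in two of the three conclusions, which gives the ``exactly one'' assertion.

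The main obstacle is organizational rather than conceptual: the eliminations, though individually routine, are numerous and the $6$-cycle tallies in Table~\ref{Tab:6CycRels} are delicate --- one must be careful about exactly which $2$-arc each extra cycle uses --- so the bookkeeping needs to be arranged to avoid error, ideally by grouping cases with common relations and proving a reusable sublemma for each group. The genuine content lies in the four surviving cases: carrying out the forced reconstruction of $\sigma$ and then the parameter normalizations needed to land cleanly inside the named families, in particular extracting $n$ even and $d^2\EQ\pm1$ in case $16$ and reshaping cases $1$ and $5$ into family~$3$ form.
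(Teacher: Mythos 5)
There is a genuine gap, and it sits exactly where the paper itself flags the difficulty: the elimination of the non-surviving cases cannot be carried out by the successor/predecessor-type comparison you describe. Your test compares, $2$-arc by $2$-arc, the number of $6$-cycles through $(A_0,A_1,\ast)$ and $(A_0,B_0,\ast)$, but in roughly a third of the cases (in the paper's numbering: $6$, $7$, $9$, $10$, $11$, $20$, $22$, $24$, $30$, $31$) these multisets \emph{agree} --- e.g.\ in case $9$ both types are $\{2,2,1\}$ and in case $30$ both are $\{2,3,4\}$ --- so your check simply passes and eliminates nothing. The paper kills those cases with strictly finer local arguments: for cases $6$ and $7$ one compares how the $6$-cycles through an arc are partitioned by the incident $2$-arcs (the four cycles through $(A_0,A_1)$ split into two disjoint pairs, those through $(A_0,B_0)$ do not); for cases $9$--$11$, $20$, $22$, $30$, $31$ one tracks specific $6$-cycles or $3$-arcs and shows the forced images are not $6$-cycles (for instance, in case $30$ the $3$-arc $(B_0,A_0,A_1,B_d)$ lies in two $6$-cycles but its forced image $(A_1,A_0,B_0,C_0)$ lies in only one). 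None of this is recoverable from $2$-arc-level counts alone, so your claim that ``the only survivors are $1$, $5$, $16$, $26$'' is unsupported as stated.

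Worse, case $24$ ($\Pr_{2m}(m+1,m+d,d)$, $N_6=7$) resists \emph{every} local argument of this kind --- the paper explicitly singles it out because the strategy of reconstructing or obstructing a local action of $\sigma$ fails there --- and your proposal offers no mechanism for it. The paper's resolution is global: each vertex pair $\{X_i,X_{i+m}\}$ is antipodal in six distinct $6$-cycles, so these pairs form a block system, $\left<\rho^m\right>$ is normal in $Aut(\Gamma)$, and the quotient graph is $\Pr_m(1,d,d)$, an arc-transitive propeller graph of girth $3$; the girth-$3$ classification (line graphs of $2$-arc-transitive generalized Petersen graphs, largest $n=24$) then forces $m\le 24$, hence $n\le 48$, contradicting $n>78$. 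Without this (or some substitute) your elimination is incomplete. By contrast, your treatment of the four surviving cases --- propagating $\sigma$ from its values near $A_0$ using unique common neighbours and extracting the congruences that place the graph in families $1$, $2$, $3$ --- does match the paper's method in outline, though in case $16$ you would also need to discard the two candidate local actions that fail to be surjective (no vertex maps to $A_1$), a step your sketch does not mention.
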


First, we will prove that edge-transitive propeller graphs from cases $1$, $5$, $16$, or $26$ are isomorphic to members of the known families from Section \ref{S:Families}. We will then show that propeller graphs from any of the other cases cannot be edge-transitive.

\begin{proof}
(Case $1$)
Let $\Pr_{n}\left(b,c,1+b+c\right)$ be edge-transitive. We observe that $N_{6} = 3$, so every edge is in two canonical $6$-cycles as well as a cycle of the form $\left(A_{i},A_{i+1},B_{i+1},C_{i+1+c},C_{i-b},B_{i-b}\right)$ for some $i$. We claim that the full action of an automorphism is determined by its action on $A_{i-1}$, $A_{i}$, $B_{i}$, and $A_{i+b}$. Observe that the two $6$-cycles containing $\left(A_{i},B_{i},A_{i+b}\right)$ are
\[\left(A_{i},B_{i},A_{i+b},A_{i-1+b},B_{i-1},A_{i-1}\right) \text{ and } \left(A_{i},B_{i},A_{i+b},A_{i+1+b},B_{i+1},A_{i+1}\right),\]
and these are distinguishable by whether they contain $A_{i-1}$ or not. Hence knowing the images of these four vertices under the automorphism will allow us to determine where it sends the second $6$-cycle, and specifically where it sends $A_{i+1}$, $B_{i+1}$, and $A_{i+1+b}$. This process can be repeated finitely many times to deduce the images of all $A$ and $B$ vertices in the graph. We may then look to the images of the non-canonical $6$-cycles to determine the images of the $C$ vertices also.

By edge-transitivity and Corollary \ref{Cor:ETimpliesAT}, there exists an automorphism $\alpha$ such that $\left(A_{0},A_{1}\right)\alpha = \left(A_{0},B_{0}\right)$, and so $\alpha$ must send the three $6$-cycles containing $\left(A_{0},A_{1}\right)$ to the three $6$-cycles containing $\left(A_{0},B_{0}\right)$ in some way. There are two $6$-cycles containing $\left(B_{-b},A_{0},A_{1}\right)$ and two $6$-cycles containing $\left(A_{-1},A_{0},B_{0}\right)$, so $B_{-b}\alpha = A_{-1}$. Also, there are two $6$-cycles containing each of $\left(A_{0},A_{1},B_{1}\right)$ and $\left(A_{0},B_{0},A_{b}\right)$, so $B_{1}\alpha = A_{b}$. This gives us enough information to determine the action of $\alpha$ on the $6$-cycles containing $\left(A_{0},A_{1}\right)$, and in particular, we have
\[\left(A_{0},A_{1},B_{1},A_{1+b},A_{b},B_{0}\right)\alpha = \left(A_{0},B_{0},A_{b},A_{1+b},B_{1},A_{1}\right).\]
Taking $i=1$, this allows us to determine the full action of $\alpha$ per the previous paragraph. This is what we obtain for $\alpha$ (where $z$ is an integer such that $4z\EQ b-c$):

\[
\begin{tabular}{rr|llll}
 & \multicolumn{1}{c}{} & $i\overset{4}{\equiv}0$ & $i\overset{4}{\equiv}1$ & $i\overset{4}{\equiv}2$ & $i\overset{4}{\equiv}3$\tabularnewline
\cline{3-6} 
 & $A_{i}\mapsto$ & $A_{iz}$ & $B_{(i-1)z}$ & $C_{(i-2)z}$ & $B_{(i-3)z-c}$\tabularnewline
$\alpha:$ & $B_{i}\mapsto$ & $A_{iz+1}$ & $A_{(i-1)z+b}$ & $C_{(i-2)z+1+b+c}$ & $C_{(i-3)z-c}$\tabularnewline
 & $C_{i}\mapsto$ & $A_{iz+2}$ & $B_{(i-1)z+b}$ & $C_{(i-2)z+2+2b+2c}$ & $B_{(i-3)z-2c}$\tabularnewline
\end{tabular}
\]

Here $b\overset{4}{\equiv}c\overset{4}{\equiv}3$, and we know already that $\left(A_{b},B_{0}\right)\alpha = \left(B_{1},A_{1}\right)$. Since $C_{c}\alpha = B_{(c-3)z-2c}$, we have that $(c-3)z-2c\EQ 1-b$, and therefore that $(c-3)z\EQ 1-b+2c\EQ 1+c-4z$. Equivalently, $(c+1)z\EQ 1+c$. But when we see that $\left(C_{1+c},B_{1},A_{1+b}\right)\alpha = \left(A_{3+c},A_{b},A_{1+b}\right)$, we find that $b\EQ 4+c$, and hence $4z\EQ 4$. All instances of $z$ in our formula for $\alpha$ above can thus be removed, revealing that $\alpha$ is precisely the defining automorphism of family $3$. We conclude that all graphs isomorphic to $\Pr_{n}\left(b,c,1+b+c\right)$ with $N_{6} = 3$ are isomorphic to members of family $3$.

\end{proof}

\begin{proof}
(Case $5$)
Let $\Pr_{n}\left(b,c,1+b-c\right)$ where $2b\EQ 2c$ and $N_{6} = 4$ be edge-transitive. From edge-transitivity and Corollary \ref{Cor:ETimpliesAT}, there is an automorphism $\beta$ such that $\left(A_{0},A_{1}\right)\beta = \left(A_{0},B_{0}\right)$, and so must send the four $6$-cycles containing $\left(A_{0},A_{1}\right)$ to those containing $\left(A_{0},B_{0}\right)$ in some way.

As with case $1$, if for a particular $i$, we know $\left(A_{i-1},A_{i},B_{i},A_{i+b}\right)\beta$, we can also find $\left(A_{i+1},B_{i+1},A_{i+1+b}\right)\beta$, and so on, thereby determining how $\beta$ acts on all $A$ and $B$ vertices. To obtain $C_{i}\beta$, we need $\left(B_{i-1},A_{i-1},A_{i},B_{i}\right)\beta$, because one of the two $6$-cycles containing $\left(A_{i},A_{i+1},B_{i+1}\right)$ contains $C_{i+1}$ and the other contains $B_{i}$. Therefore, all $C$ vertices are determined as well.

Now, we have two choices for how $\beta$ acts on $B_{0}$ and $B_{1}$, and these choices will determine the action of $\beta$ on the $6$-cycles containing $\left(A_{0},A_{1}\right)$, thereby determining the full action of $\beta$. Specifically we have that $B_{0}\beta\in\{A_{1},A_{-1}\}$ and $B_{1}\beta\in\{A_{b},C_{0}\}$. Let us denote by $\beta_{1}$ and $\beta_{2}$ the versions of $\beta$ where $B_{0}\beta = A_{1}$ and, respectively, $B_{1}\beta = A_{b}$ or $B_{1}\beta = C_{0}$. Let us also use $\beta_{3}$ and $\beta_{4}$ for the versions of $\beta$ where $B_{0}\beta = A_{-1}$ and, respectively, $B_{1}\beta = A_{b}$ or $B_{1}\beta = C_{0}$.

Let $z$ be an integer such that $4z\EQ b+c$. Then $\beta_{1}$ and $\beta_{2}$ are the following:
\[
\begin{tabular}{rr|llll}
 & \multicolumn{1}{c}{} & $i\overset{4}{\equiv}0$ & $i\overset{4}{\equiv}1$ & $i\overset{4}{\equiv}2$ & $i\overset{4}{\equiv}3$\tabularnewline
\cline{3-6} 
 & $A_{i}\mapsto$ & $A_{iz}$ & $B_{(i-1)z}$ & $C_{(i-2)z+c}$ & $B_{(i-3)z+c}$\tabularnewline
$\beta_{1}:$ & $B_{i}\mapsto$ & $A_{iz+1}$ & $A_{(i-1)z+b}$ & $C_{(i-2)z+1+b}$ & $C_{(i-3)z+2c}$\tabularnewline
 & $C_{i}\mapsto$ & $B_{iz+1-b}$ & $A_{(i-1)z-1+b}$ & $B_{(i-2)z+1+b-c}$ & $C_{(i-3)z-1+b+c}$\tabularnewline
\end{tabular}
\]

\[
\begin{tabular}{rr|llll}
 & \multicolumn{1}{c}{} & $i\overset{4}{\equiv}0$ & $i\overset{4}{\equiv}1$ & $i\overset{4}{\equiv}2$ & $i\overset{4}{\equiv}3$\tabularnewline
\cline{3-6} 
 & $A_{i}\mapsto$ & $A_{iz}$ & $B_{(i-1)z}$ & $C_{(i-2)z+c}$ & $B_{(i-3)z+c}$\tabularnewline
$\beta_{2}:$ & $B_{i}\mapsto$ & $A_{iz+1}$ & $C_{(i-1)z}$ & $C_{(i-2)z+1+b}$ & $A_{(i-3)z+c}$\tabularnewline
 & $C_{i}\mapsto$ & $B_{iz+1}$ & $C_{(i-1)z-1-b+c}$ & $B_{(i-2)z+1+b}$ & $A_{(i-3)z-1+c}$\tabularnewline
\end{tabular}
\]

The automorphism $\beta_{1}$ requires that $n=4m$ for some $m$, that $b\overset{4}{\equiv}3$, and that $c\overset{4}{\equiv}1$. Also, $\left(A_{0},B_{0},C_{c}\right)\beta_{1} = \left(A_{0},A_{1},A_{(c-1)z-1+b}\right)$, so $\left(c-1\right)z\EQ 3-b$, and $\left(A_{1},B_{1},C_{1+c}\right)\beta_{1} = \left(B_{0},A_{b},B_{(c-1)z+1+b-c}\right)$, so $\left(c-1\right)z\EQ c-1$. This implies that $b+c\EQ 4$, which suggests that $4z\EQ 4$. Also, since $2b\EQ 2c$, we have $4b\EQ 8$. A graph of case $5$ admitting $\beta_{1}$ as an automorphism must therefore be isomorphic to a member of family $3$.

Conversely, the automorphism $\beta_{2}$ requires that $n=4m$ for some $m$, that $b\overset{4}{\equiv}1$, and that $c\EQ 3$. As well, $\left(A_{0},B_{0},C_{c}\right)\beta_{2} = \left(A_{0},A_{1},A_{(c-3)z-1+c}\right)$, so $\left(c-3\right)z\EQ 3-c$. Continuing, $\left(A_{1},B_{1},C_{1+c}\right)\beta_{2} = \left(B_{0},C_{0},B_{(1+c)z+1}\right)$, so $\left(1+c\right)z\EQ -1-c$. Therefore $\left(1+c\right)z\EQ 3+b$, and so $4+b+c\EQ 0$. Also, $d\EQ -3-2b$ and because $2b\EQ 2c$, we have $4b\EQ -8$. Clearly this graph is then isomorphic to a member of family $3$.

Next, we come to $\beta_{3}$ and $\beta_{4}$.
\[
\begin{tabular}{rr|llll}
 & \multicolumn{1}{c}{} & $i\overset{4}{\equiv}0$ & $i\overset{4}{\equiv}1$ & $i\overset{4}{\equiv}2$ & $i\overset{4}{\equiv}3$\tabularnewline
\cline{3-6} 
 & $A_{i}\mapsto$ & $A_{iz}$ & $B_{(i-1)z}$ & $C_{(i-2)z+c}$ & $B_{(i-3)z+c}$\tabularnewline
$\beta_{3}:$ & $B_{i}\mapsto$ & $A_{iz-1}$ & $A_{(i-1)z+b}$ & $C_{(i-2)z-1+b}$ & $C_{(i-3)z+2c}$\tabularnewline
 & $C_{i}\mapsto$ & $B_{iz-1-b}$ & $A_{(i-1)z+1+b}$ & $B_{(i-2)z-1+b-c}$ & $C_{(i-3)z+1+b+c}$\tabularnewline
\end{tabular}
\]

\[
\begin{tabular}{rr|llll}
 & \multicolumn{1}{c}{} & $i\overset{4}{\equiv}0$ & $i\overset{4}{\equiv}1$ & $i\overset{4}{\equiv}2$ & $i\overset{4}{\equiv}3$\tabularnewline
\cline{3-6} 
 & $A_{i}\mapsto$ & $A_{iz}$ & $B_{(i-1)z}$ & $C_{(i-2)z+c}$ & $B_{(i-3)z+c}$\tabularnewline
$\beta_{4}:$ & $B_{i}\mapsto$ & $A_{iz-1}$ & $C_{(i-1)z}$ & $C_{(i-2)z-1+b}$ & $A_{(i-3)z+c}$\tabularnewline
 & $C_{i}\mapsto$ & $B_{iz-1}$ & $C_{(i-1)z+1-b+c}$ & $B_{(i-2)z-1+b}$ & $A_{(i-3)z+1+c}$\tabularnewline
\end{tabular}
\]

Both $\beta_{3}$ and $\beta_{4}$ require that $n = 4m$ for some $m$. It is immediate that $\beta_{3}$ requires $b\overset{4}{\equiv}3$ and $c\overset{4}{\equiv}1$ and $\beta_{4}$ requires $b\overset{4}{\equiv}1$ and $c\overset{4}{\equiv}3$. However, we observe that there are vertices in the graph which $\beta_{3}$ and $\beta_{4}$ can never map to. For example, no vertex will every be sent to $A_{1}$ by $\beta_{3}$, because, for any $k$, $A_{4k}$ and $C_{4k+1}$ are sent to $A$ vertices with index $0$ mod $4$, and $B_{4k}$ and $B_{4k+1}$ are sent to $A$ vertices with index $3$ mod $4$. The same problem arises with $\beta_{4}$ as well. Therefore neither $\beta_{3}$ nor $\beta_{4}$ can be automorphisms.

\end{proof}

\begin{proof}
(Case $16$)
Let $\Pr_{n}\left(2d,2,d\right)$ be edge-transitive with $N_{6} = 6$. We list the six $6$-cycles containing each edge type below.

\[
\resizebox{\textwidth}{!}{
\begin{tabular}{|l|ll|}
\cline{1-3}
Arc & $6$-cycles & \tabularnewline
\cline{1-3}
 & $\left(A_{0},A_{1},A_{2},B_{2},C_{2},B_{0}\right)$ & $\left(A_{0},A_{1},A_{2},B_{2-2d},C_{2-2d},B_{-2d}\right)$\tabularnewline
$\left(A_{0},A_{1}\right)$ & $\left(A_{0},A_{1},B_{1},A_{1+2d},A_{2d},B_{0}\right)$ & $\left(A_{0},A_{1},B_{1},C_{1},B_{-1},A_{-1}\right)$\tabularnewline
 & $\left(A_{0},A_{1},B_{1-2d},A_{1-2d},A_{-2d},B_{-2d}\right)$ & $\left(A_{0},A_{1},B_{1-2d},C_{1-2d},B_{-1-2d},A_{-1}\right)$\tabularnewline
\cline{1-3}
 & $\left(A_{0},B_{0},A_{2d},A_{1+2d},B_{1},A_{1}\right)$ & $\left(A_{0},B_{0},A_{2d},A_{-1+2d},B_{-1},A_{-1}\right)$\tabularnewline
$\left(A_{0},B_{0}\right)$ & $\left(A_{0},B_{0},C_{0},B_{-2},A_{-2},A_{-1}\right)$ & $\left(A_{0},B_{0},C_{0},C_{-d},C_{-2d},B_{-2d}\right)$\tabularnewline
 & $\left(A_{0},B_{0},C_{2},B_{2},A_{2},A_{1}\right)$ & $\left(A_{0},B_{0},C_{2},C_{2-d},C_{2-2d},B_{-2d}\right)$\tabularnewline
\cline{1-3}
 & $\left(B_{0},A_{2d},A_{1+2d},A_{2+2d},B_{2},C_{2}\right)$ & $\left(B_{0},A_{2d},A_{1+2d},B_{1},A_{1},A_{0}\right)$\tabularnewline
$\left(B_{0},A_{2d}\right)$ & $\left(B_{0},A_{2d},A_{-1+2d},A_{-2+2d},B_{-2},C_{0}\right)$ & $\left(B_{0},A_{2d},A_{-2d},B_{-1},A_{-1},A_{0}\right)$\tabularnewline
 & $\left(B_{0},A_{2d},B_{2d},C_{2d},C_{d},C_{0}\right)$ & $\left(B_{0},A_{2d},B_{2d},C_{2+2d},C_{2+d},C_{2}\right)$\tabularnewline
\cline{1-3}
 & $\left(B_{0},C_{2},B_{2},A_{2},A_{1},A_{0}\right)$ & $\left(B_{0},C_{2},B_{2},A_{2+2d},A_{1+2d},A_{2d}\right)$\tabularnewline
$\left(B_{0},C_{2}\right)$ & $\left(B_{0},C_{2},C_{2+d},B_{d},C_{d},C_{0}\right)$ & $\left(B_{0},C_{2},C_{2+d},C_{2+2d},B_{2d},A_{2d}\right)$\tabularnewline
 & $\left(B_{0},C_{2},C_{2-d},B_{-d},C_{-d},C_{0}\right)$ & $\left(B_{0},C_{2},C_{2-d},C_{2-2d},B_{-2d},A_{0}\right)$\tabularnewline
\cline{1-3}
 & $\left(C_{0},B_{0},A_{0},A_{-1},A_{-2},B_{-2}\right)$ & $\left(C_{0},B_{0},A_{0},B_{-2d},C_{-2d},C_{-d}\right)$\tabularnewline
$\left(C_{0},B_{0}\right)$ & $\left(C_{0},B_{0},A_{2d},A_{-1+2d},A_{-2+2d},B_{-2}\right)$ & $\left(C_{0},B_{0},A_{2d},B_{2d},C_{2d},C_{d}\right)$\tabularnewline
 & $\left(C_{0},B_{0},C_{2},C_{2+d},B_{d},C_{d}\right)$ & $\left(C_{0},B_{0},C_{2},C_{2-d},B_{-d},C_{-d}\right)$\tabularnewline
\cline{1-3}
 & $\left(C_{0},C_{d},B_{d},A_{d},B_{-d},C_{-d}\right)$ & $\left(C_{0},C_{d},B_{d},C_{2+d},C_{2},B_{0}\right)$\tabularnewline
$\left(C_{0},C_{d}\right)$ & $\left(C_{0},C_{d},B_{-2+d},A_{-2+d},B_{-2-d},C_{-d}\right)$ & $\left(C_{0},C_{d},B_{-2+d},C_{-2+d},C_{-2},B_{-2}\right)$\tabularnewline
 & $\left(C_{0},C_{d},C_{2d},B_{2d},A_{2d},B_{0}\right)$ & $\left(C_{0},C_{d},C_{2d},B_{-2+2d},A_{-2+2d},B_{-2}\right)$\tabularnewline
\cline{1-3}
\end{tabular}
}
\]

By edge-transitivity and Corollary \ref{Cor:ETimpliesAT}, there is an automorphism $\gamma$ such that $\left(A_{0},A_{1}\right)\gamma = \left(A_{0},B_{0}\right)$. We notice that every $6$-cycle above is uniquely determined by identifying a $3$-arc contained in it; for example, although there are two $6$-cycles containing $\left(A_{0},A_{1},B_{1}\right)$, only one contains the $3$-arc $\left(B_{0},A_{0},A_{1},B_{1}\right)$, and only one contains the $3$-arc $\left(A_{-1},A_{0},A_{1},B_{1}\right)$. Moreover, none contain the $3$-arc $\left(B_{-2d},A_{0},A_{1},B_{1}\right)$.

Now, we claim that we can deduce the entire action of $\gamma$ on the vertices of the graph from its action on $A_{i}$, $A_{i+1}$, $B_{i}$, and $B_{i+1}$, for some fixed integer $i$. To establish this, we shall prove that if we know the images of these four vertices under $\gamma$, then we can obtain the images of $A_{i+2}$, $B_{i+2}$, and $C_{i+2}$.

The two $6$-cycles containing $\left(B_{i},A_{i},A_{i+1}\right)$ are
\[\left(B_{i},A_{i},A_{i+1},A_{i+2},B_{i+2},C_{i+2}\right)\]
and
\[\left(B_{i},A_{i},A_{i+1},B_{i+1},A_{i+1+2d},A_{i+2d}\right).\]
If we know how $\gamma$ acts on $A_{i}$, $A_{i+1}$, $B_{i}$, and $B_{i+1}$, then we can immediately determine the image of the latter $6$-cycle, since it is the only one which contains the $3$-arc $\left(B_{0}\gamma,A_{0}\gamma,A_{1}\gamma,B_{1}\gamma\right)$. From there, the image of the former $6$-cycle is immediately known also, giving us the images of $A_{i+2}$, $B_{i+2}$, and $C_{i+2}$.

So now consider that we already have $A_{0}\gamma = A_{0}$ and $A_{1}\gamma = B_{0}$ by assumption. From the list of $6$-cycles above, we see that $B_{0}\gamma \in \{A_{1},A_{-1},B_{-2d}\}$. If $B_{0}\gamma = A_{1}$, then $B_{1}\gamma \in \{A_{2d},C_{2}\}$, but if $B_{0}\gamma = A_{-1}$, then $B_{1}\gamma \in \{A_{2d},C_{0}\}$, whereas if $B_{0}\gamma = B_{-2d}$, then $B_{1}\gamma \in \{C_{0},C_{2}\}$. This gives us six possible actions of $\gamma$ on the graph, which are denoted below by $\gamma_{i}$ for $i\in \{1,2,3,4,5,6\}$.

\[
\begin{tabular}{c|c}
$\gamma_{i}$ & Assumptions\tabularnewline
\cline{1-2}
$\gamma_{1}$: & $B_{0}\mapsto A_{1}$ and $B_{1}\mapsto A_{2d}$\tabularnewline
$\gamma_{2}$: & $B_{0}\mapsto A_{1}$ and $B_{1}\mapsto C_{2}$\tabularnewline
$\gamma_{3}$: & $B_{0}\mapsto A_{-1}$ and $B_{1}\mapsto A_{2d}$\tabularnewline
$\gamma_{4}$: & $B_{0}\mapsto A_{-1}$ and $B_{1}\mapsto C_{0}$\tabularnewline
$\gamma_{5}$: & $B_{0}\mapsto B_{-2d}$ and $B_{1}\mapsto C_{0}$\tabularnewline
$\gamma_{6}$: & $B_{0}\mapsto B_{-2d}$ and $B_{1}\mapsto C_{2}$\tabularnewline
\end{tabular}
\]

The automorphism $\gamma_{1}$ is the following (where $z$ is an integer such that $6z\EQ 3+3d$):
\[
\resizebox{\textwidth}{!}{
\begin{tabular}{rr|llllll}
& \multicolumn{1}{c}{} & $i\overset{6}{\equiv}0$ & $i\overset{6}{\equiv}1$ & $i\overset{6}{\equiv}2$ & $i\overset{6}{\equiv}3$ & $i\overset{6}{\equiv}4$ & $i\overset{6}{\equiv}5$\tabularnewline
\cline{3-8}
& $A_{i}\mapsto$ & $A_{iz}$ & $B_{(i-1)z}$ & $C_{(i-2)z+2}$ & $C_{(i-3)z+2+d}$ & $B_{(i-4)z+2+d}$ & $A_{(i-5)z+2+3d}$\tabularnewline
$\gamma_{1}:$ & $B_{i}\mapsto$ & $A_{iz+1}$ & $A_{(i-1)z+2d}$ & $B_{(i-2)z+2}$ & $C_{(i-3)z+2+2d}$ & $C_{(i-4)z+4+d}$ & $B_{(i-5)z+2+3d}$\tabularnewline
& $C_{i}\mapsto$ & $B_{iz+1-2d}$ & $A_{(i-1)z-1+2d}$ & $A_{(i-2)z+2}$ & $B_{(i-3)z+2d}$ & $C_{(i-4)z+4}$ & $C_{(i-5)z+2+3d}$\tabularnewline
\end{tabular}
}
\]

Immediately we see that $n = 6m$ for some $m$. Also, clearly $b$ (that is, $2d$) cannot be $1$, $2$, $3$, or $5$ (mod $6$), and $d$ cannot be $0$, $2$, $3$, or $4$ (mod $6$). For example, if $b\overset{6}{\equiv}2$, then we have $\left(B_{0},A_{2d}\right)\gamma_{1} = \left(A_{1},C_{(2d-2)z+2}\right)$, which of course is not an arc of the graph. We therefore have that $d\overset{6}{\equiv}5$. Notice also that $\left(C_{0},B_{0},A_{2d}\right)\gamma_{1} = \left(B_{1-2d},A_{1},B_{(2d-4)z+2+d}\right)$, so that $\left(2d-4\right)z\EQ -1-d$. Notice therefore that $\left(2d+2\right)z\EQ 2+2d$. Furthermore, $\left(B_{0},C_{2},C_{2+d}\right)\gamma_{1} = \left(A_{1},A_{2},A_{(1+d)z-1+2d}\right)$, so $\left(1+d\right)z\EQ 4-2d$. Therefore $\left(2d+2\right)z\EQ 8-4d\EQ 2+2d$, implying that $6d\EQ 6$.

If we set $d = 6g-1$ for some $g$, then $6d = 36g-6\EQ 6$, implying that $36g\EQ 12$. Then $d^2\EQ 36g^2-12g+1\EQ 12g-12g+1\EQ 1$, and the graph is therefore a member of family $1$. More specifically, it must be a member of family $1^{\star}$.

The automorphism $\gamma_{2}$ is the defining automorphism of family $1$.

The automorphism $\gamma_{3}$ is the following (where $z$ is an integer such that $6z\EQ -3+3d$):
\[
\resizebox{\textwidth}{!}{
\begin{tabular}{rr|llllll}
& \multicolumn{1}{c}{} & $i\overset{6}{\equiv}0$ & $i\overset{6}{\equiv}1$ & $i\overset{6}{\equiv}2$ & $i\overset{6}{\equiv}3$ & $i\overset{6}{\equiv}4$ & $i\overset{6}{\equiv}5$\tabularnewline
\cline{3-8}
& $A_{i}\mapsto$ & $A_{iz}$ & $B_{(i-1)z}$ & $C_{(i-2)z}$ & $C_{(i-3)z+d}$ & $B_{(i-4)z-2+d}$ & $A_{(i-5)z-2+3d}$\tabularnewline
$\gamma_{3}:$ & $B_{i}\mapsto$ & $A_{iz-1}$ & $A_{(i-1)z+2d}$ & $B_{(i-2)z-2}$ & $C_{(i-3)z+2d}$ & $C_{(i-4)z-2+d}$ & $B_{(i-5)z-2+3d}$\tabularnewline
& $C_{i}\mapsto$ & $B_{iz-1-2d}$ & $A_{(i-1)z+1+2d}$ & $A_{(i-2)z-2}$ & $B_{(i-3)z+2d}$ & $C_{(i-4)z-2}$ & $C_{(i-5)z+3d}$\tabularnewline
\end{tabular}
}
\]

It is immediate that $n = 6m$ for some $m$, and that $d\overset{6}{\equiv}5$.

However, notice that no vertex is sent via $\gamma_{3}$ to $A_{1}$; hence $\gamma_{3}$ cannot actually be an automorphism.

The automorphism $\gamma_{4}$ is the defining automorphism for family $2$.

The automorphism $\gamma_{5}$ must be the following (where $z$ is an integer such that $6z\EQ 3+3d$):
\[
\resizebox{\textwidth}{!}{
\begin{tabular}{rr|llllll}
& \multicolumn{1}{c}{} & $i\overset{6}{\equiv}0$ & $i\overset{6}{\equiv}1$ & $i\overset{6}{\equiv}2$ & $i\overset{6}{\equiv}3$ & $i\overset{6}{\equiv}4$ & $i\overset{6}{\equiv}5$\tabularnewline
\cline{3-8}
& $A_{i}\mapsto$ & $A_{iz}$ & $B_{(i-1)z}$ & $C_{(i-2)z+2}$ & $C_{(i-3)z+2+d}$ & $B_{(i-4)z+2+d}$ & $A_{(i-5)z+2+3d}$\tabularnewline
$\gamma_{5}:$ & $B_{i}\mapsto$ & $B_{iz-2d}$ & $C_{(i-1)z}$ & $C_{(i-2)z+2-d}$ & $B_{(i-3)z+d}$ & $A_{(i-4)z+2+d}$ & $A_{(i-5)z+1+3d}$\tabularnewline
& $C_{i}\mapsto$ & $A_{iz-2d}$ & $B_{(i-1)z-2}$ & $C_{(i-2)z+2-2d}$ & $C_{(i-3)z+d}$ & $B_{(i-4)z+2-d}$ & $A_{(i-5)z+3d}$\tabularnewline
\end{tabular}
}
\]
It is immediate that $d\overset{6}{\equiv}1$. However, we notice that there is no vertex which may be sent by $\gamma_{6}$ to $A_{1}$, so $\gamma_{5}$ must not be an automorphism after all.

The automorphism $\gamma_{6}$ is the following (where $z$ is an integer such that $6z\EQ -3+3d$):
\[
\resizebox{\textwidth}{!}{
\begin{tabular}{rr|llllll}
& \multicolumn{1}{c}{} & $i\overset{6}{\equiv}0$ & $i\overset{6}{\equiv}1$ & $i\overset{6}{\equiv}2$ & $i\overset{6}{\equiv}3$ & $i\overset{6}{\equiv}4$ & $i\overset{6}{\equiv}5$\tabularnewline
\cline{3-8}
& $A_{i}\mapsto$ & $A_{iz}$ & $B_{(i-1)z}$ & $C_{(i-2)z}$ & $C_{(i-3)z+d}$ & $B_{(i-4)z-2+d}$ & $A_{(i-5)z-2+3d}$\tabularnewline
$\gamma_{6}:$ & $B_{i}\mapsto$ & $B_{iz-2d}$ & $C_{(i-1)z+2}$ & $C_{(i-2)z-d}$ & $B_{(i-3)z+d}$ & $A_{(i-4)z-2+d}$ & $A_{(i-5)z-1+3d}$\tabularnewline
& $C_{i}\mapsto$ & $A_{iz-2d}$ & $B_{(i-1)z+2}$ & $C_{(i-2)z-2d}$ & $C_{(i-3)z+2+d}$ & $B_{(i-4)z-2-d}$ & $A_{(i-5)z+3d}$\tabularnewline
\end{tabular}
}
\]

Clearly $n=6m$ for some $m$, and it is simple to deduce that $d\overset{6}{\equiv} 1$. We will fish out additional conditions by calculating the images of some $2$-arcs under $\gamma_{6}$. First, $\left(B_{0},C_{0},C_{d}\right)\gamma_{6} = \left(B_{-2d},A_{-2d},B_{(d-1)z+2}\right)$, so $\left(d-1\right)z\EQ -2-4d$. Second, $\left(B_{1},C_{1},C_{1+d}\right)\gamma_{6} = \left(C_{2},B_{2},C_{(d-1)z-2d}\right)$, so $\left(d-1\right)z\EQ 4+2d$, implying that $6d\EQ -6$.

If we write $d=6g+1$ for some $g$, then $6d = 36g+6\EQ -6$, so that $36g\EQ -12$. Therefore $d^2 = 36g^2+12g+1\EQ -12g+12g+1\EQ 1$, which ensures that the graph is a member of family $1$. In fact, since $6d\EQ -6$, the graph is isomorphic to a member of family $1^{\star}$.

Hence graphs in case $16$ are isomorphic to members of family $1$, $2$, or $3$.

\end{proof}

\begin{proof}
(Case $26$)
Let $\Pr_{n}\left(2d,2,d\right)$ where $3d\EQ 3$ be edge-transitive. There must then exist an automorphism $\delta$ such that $\left(A_{0},A_{1}\right)\delta = \left(A_{0},B_{0}\right)$. We note that each arc in this graph has successor and predecessor type $\{2,2,3\}$, which allows us to determine that $B_{1-2d}\delta = C_{0}$, $B_{0}\delta = A_{1}$, $\{A_{2},B_{1}\}\delta = \{A_{2d},C_{2}\}$, and $\{A_{-1},B_{-2d}\}\delta = \{A_{-1},B_{-2d}\}$.

If $A_{2}\delta = A_{2d}$, we can deduce that $\delta$ is the defining automorphism of family $1$. If instead $A_{2}\delta = C_{2}$, we can prove that $\delta$ is the following:
\[
\begin{tabular}{r|llllll}
\multicolumn{1}{c}{} & $i\overset{6}{\equiv}0$ & $i\overset{6}{\equiv}1$ & $i\overset{6}{\equiv}2$ & $i\overset{6}{\equiv}3$ & $i\overset{6}{\equiv}4$ & $i\overset{6}{\equiv}5$\tabularnewline
\cline{2-7}
$A_{i}\mapsto$ & $A_{i}$ & $B_{i-1}$ & $C_{i}$ & $C_{i-1+d}$ & $B_{i-2+d}$ & $A_{i}$\tabularnewline
$B_{i}\mapsto$ & $A_{i+1}$ & $A_{i-1+2d}$ & $B_{i}$ & $C_{i-1+2d}$ & $C_{i+d}$ & $B_{i}$\tabularnewline
$C_{i}\mapsto$ & $B_{i+1-2d}$ & $A_{i-2+2d}$ & $A_{i}$ & $B_{i-3+2d}$ & $C_{i}$ & $C_{i}$\tabularnewline
\end{tabular}
\]
We observe that this is the defining automorphism of family $1^{\star}$. Hence edge-transitive graphs in case $26$ are, up to isomorphism, members of family $1$.

\end{proof}

We have now shown that edge-transitive propeller graphs from cases $1$, $5$, $16$, and $26$ must be members of the known families, up to isomorphism. We will now show that propeller graphs from any other case cannot be edge-transitive. Before doing that, we want to identify a particularly vexing case. In the course of this research, it was particularly difficult to find a proof that propeller graphs from case $24$ could not be arc-transitive. A strategy that worked in all other cases - namely, identifying the local or full action of an automorphism sending $\left(A_{0},A_{1}\right)$ to $\left(A_{0},B_{0}\right)$ - was not possible here, and a different method of attack was necessary. We have singled out the proof for case $24$ for this reason.

\begin{proof}
(Case $24$ contains no edge-transitive graphs)

Let $\Pr_{n}\left(b,c,1-b+c\right)$ where $2b\EQ 2$ and $2c\EQ 2d$ be edge-transitive with girth at least $5$ and $N_{6}=7$. We must have $n = 2m$ for some $m$ and $b\overset{2m}{\equiv}m+1$, because otherwise $b\EQ 1$ and the girth would be $3$. By the same reasoning, $c\overset{2m}{\equiv} m+d$, so we may write the graph as $\Gamma = \Pr_{2m}\left(m+1,m+d,d\right)$.

We note immediately that for any index $i$, $A_{i+m}$ is the unique vertex which is antipodal to $A_{i}$ in six distinct $6$-cycles. This property similarly holds for $B_{i}$ and $B_{i+m}$, and for $C_{i}$ and $C_{i+m}$. We illustrate this property in Figure \ref{Fig:Case24}, which shows the subgraphs induced by these $6$-cycles. Hence the sets $\{A_{i},A_{i+m}\}$, $\{B_{i},B_{i+m}\}$, $\{C_{i},C_{i+m}\}$ across $i\in \mathbb{Z}_m$ form a block system for the graph, and therefore $\left<\rho^m\right>$ is normal in the automorphism group of the graph.

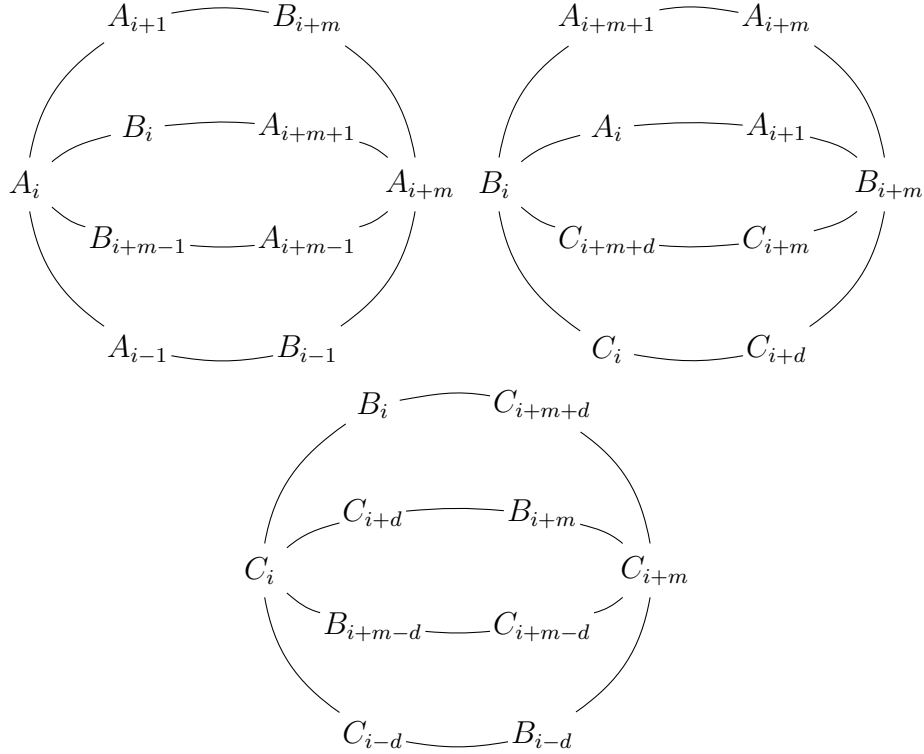
\begin{figure}[!ht]
\centering
\begin{tikzpicture}
	\newcommand \vertlarge {2.2}
    \newcommand \vertsmall {0.75}
    \newcommand \horizend {1.5}
    \newcommand \horizmid {2.25}
	\newcommand \angleone {80}
	\newcommand \angletwo {215}
	\newcommand \anglethree {40}
	\newcommand \anglefour {195}
	\newcommand \anglefive {10}
	\newcommand \anglesix {5}
	\tikzstyle{vertex} = [minimum size=20pt,inner sep=1pt]
	\node[vertex] (01) at (0,0) {$A_{i}$};
	\node[vertex] (02) at (\horizend,\vertlarge) {$A_{i+1}$};
	\node[vertex] (03) at (\horizend,\vertsmall) {$B_{i}$};
	\node[vertex] (04) at (\horizend,-\vertsmall) {$B_{i+m-1}$};
	\node[vertex] (05) at (\horizend,-\vertlarge) {$A_{i-1}$};
	\node[vertex] (06) at (\horizend+\horizmid,\vertlarge) {$B_{i+m}$};
	\node[vertex] (07) at (\horizend+\horizmid,\vertsmall) {$A_{i+m+1}$};
	\node[vertex] (08) at (\horizend+\horizmid,-\vertsmall) {$A_{i+m-1}$};
	\node[vertex] (09) at (\horizend+\horizmid,-\vertlarge) {$B_{i-1}$};
	\node[vertex] (10) at (2*\horizend+\horizmid,0) {$A_{i+m}$};
    \draw (01) edge[out=\angleone,in=\angletwo] (02);
    \draw (02) edge[out=\anglefive,in=180-\anglefive] (06);
    \draw (06) edge[out=180-\angletwo,in=180-\angleone] (10);
    \draw (01) edge[out=360-\angleone,in=360-\angletwo] (05);
    \draw (05) edge[out=360-\anglefive,in=180+\anglefive] (09);
    \draw (09) edge[out=180+\angletwo,in=180+\angleone] (10);
    \draw (01) edge[out=\anglethree,in=\anglefour] (03);
    \draw (03) edge[out=\anglesix,in=180-\anglesix] (07);
    \draw (07) edge[out=180-\anglefour,in=180-\anglethree] (10);
    \draw (01) edge[out=360-\anglethree,in=360-\anglefour] (04);
    \draw (04) edge[out=360-\anglesix,in=180+\anglesix] (08);
    \draw (08) edge[out=180+\anglefour,in=180+\anglethree] (10);
\end{tikzpicture}
\begin{tikzpicture}
	\newcommand \vertlarge {2.2}
    \newcommand \vertsmall {0.75}
    \newcommand \horizend {1.5}
    \newcommand \horizmid {2.25}
	\newcommand \angleone {80}
	\newcommand \angletwo {215}
	\newcommand \anglethree {40}
	\newcommand \anglefour {195}
	\newcommand \anglefive {10}
	\newcommand \anglesix {5}
	\tikzstyle{vertex} = [minimum size=20pt,inner sep=1pt]
	\node[vertex] (01) at (0,0) {$B_{i}$};
	\node[vertex] (02) at (\horizend,\vertlarge) {$A_{i+m+1}$};
	\node[vertex] (03) at (\horizend,\vertsmall) {$A_{i}$};
	\node[vertex] (04) at (\horizend,-\vertsmall) {$C_{i+m+d}$};
	\node[vertex] (05) at (\horizend,-\vertlarge) {$C_{i}$};
	\node[vertex] (06) at (\horizend+\horizmid,\vertlarge) {$A_{i+m}$};
	\node[vertex] (07) at (\horizend+\horizmid,\vertsmall) {$A_{i+1}$};
	\node[vertex] (08) at (\horizend+\horizmid,-\vertsmall) {$C_{i+m}$};
	\node[vertex] (09) at (\horizend+\horizmid,-\vertlarge) {$C_{i+d}$};
	\node[vertex] (10) at (2*\horizend+\horizmid,0) {$B_{i+m}$};
    \draw (01) edge[out=\angleone,in=\angletwo] (02);
    \draw (02) edge[out=\anglefive,in=180-\anglefive] (06);
    \draw (06) edge[out=180-\angletwo,in=180-\angleone] (10);
    \draw (01) edge[out=360-\angleone,in=360-\angletwo] (05);
    \draw (05) edge[out=360-\anglefive,in=180+\anglefive] (09);
    \draw (09) edge[out=180+\angletwo,in=180+\angleone] (10);
    \draw (01) edge[out=\anglethree,in=\anglefour] (03);
    \draw (03) edge[out=\anglesix,in=180-\anglesix] (07);
    \draw (07) edge[out=180-\anglefour,in=180-\anglethree] (10);
    \draw (01) edge[out=360-\anglethree,in=360-\anglefour] (04);
    \draw (04) edge[out=360-\anglesix,in=180+\anglesix] (08);
    \draw (08) edge[out=180+\anglefour,in=180+\anglethree] (10);
\end{tikzpicture}

\begin{tikzpicture}
	\newcommand \vertlarge {2.2}
    \newcommand \vertsmall {0.75}
    \newcommand \horizend {1.5}
    \newcommand \horizmid {2.25}
	\newcommand \angleone {80}
	\newcommand \angletwo {215}
	\newcommand \anglethree {40}
	\newcommand \anglefour {195}
	\newcommand \anglefive {10}
	\newcommand \anglesix {5}
	\tikzstyle{vertex} = [minimum size=20pt,inner sep=1pt]
	\node[vertex] (01) at (0,0) {$C_{i}$};
	\node[vertex] (02) at (\horizend,\vertlarge) {$B_{i}$};
	\node[vertex] (03) at (\horizend,\vertsmall) {$C_{i+d}$};
	\node[vertex] (04) at (\horizend,-\vertsmall) {$B_{i+m-d}$};
	\node[vertex] (05) at (\horizend,-\vertlarge) {$C_{i-d}$};
	\node[vertex] (06) at (\horizend+\horizmid,\vertlarge) {$C_{i+m+d}$};
	\node[vertex] (07) at (\horizend+\horizmid,\vertsmall) {$B_{i+m}$};
	\node[vertex] (08) at (\horizend+\horizmid,-\vertsmall) {$C_{i+m-d}$};
	\node[vertex] (09) at (\horizend+\horizmid,-\vertlarge) {$B_{i-d}$};
	\node[vertex] (10) at (2*\horizend+\horizmid,0) {$C_{i+m}$};
    \draw (01) edge[out=\angleone,in=\angletwo] (02);
    \draw (02) edge[out=\anglefive,in=180-\anglefive] (06);
    \draw (06) edge[out=180-\angletwo,in=180-\angleone] (10);
    \draw (01) edge[out=360-\angleone,in=360-\angletwo] (05);
    \draw (05) edge[out=360-\anglefive,in=180+\anglefive] (09);
    \draw (09) edge[out=180+\angletwo,in=180+\angleone] (10);
    \draw (01) edge[out=\anglethree,in=\anglefour] (03);
    \draw (03) edge[out=\anglesix,in=180-\anglesix] (07);
    \draw (07) edge[out=180-\anglefour,in=180-\anglethree] (10);
    \draw (01) edge[out=360-\anglethree,in=360-\anglefour] (04);
    \draw (04) edge[out=360-\anglesix,in=180+\anglesix] (08);
    \draw (08) edge[out=180+\anglefour,in=180+\anglethree] (10);
\end{tikzpicture}
\caption{In case $24$, certain pairs of vertices are antipodal in six distinct $6$-cycles.}
\label{Fig:Case24}
\end{figure}

Since $\left<\rho^m\right>$ is normal in $Aut\left(\Gamma\right)$, we can consider its quotient graph $\Upsilon$, with vertex set $\{u_{i},v_{i},w_{i}\vert i\in \mathbb{Z}_{m}\}$, where $u_{i}$ corresponds to $\{A_{i},A_{i+m}\}$ in $\Gamma$, $v_{i}$ corresponds to $\{B_{i},B_{i+m}\}$ in $\Gamma$, and $w_{i}$ corresponds to $\{C_{i},C_{i+m}\}$ in $\Gamma$. The edge set of $\Upsilon$ induced by modding out $\left<\rho^m\right>$ is
\[
\{\{u_{i},u_{i+1}\},\{u_{i},v_{i}\},\{v_{i},u_{i+1}\},\{v_{i},w_{i+d}\},\{w_{i},v_{i}\},\{w_{i},w_{i+d}\}\vert i\in\mathbb{Z}_m\},
\]

and clearly $\Upsilon = \Pr_{m}\left(1,d,d\right)$, a propeller graph of girth $3$. Moreover, it is not difficult to see that if $\Gamma$ is arc-transitive, then so is $\Upsilon$.

We have already seen that the largest (with respect to order) arc-transitive propeller graph of girth $3$ is $\Pr_{24}\left(1,5,5\right)$, so $\Gamma$ must have $m\le 24$. Therefore any arc-transitive graphs from case $24$ must have $n\le 48$ and therefore are members of the known families by Lemma \ref{Lem:lessthan78}.
\end{proof}

\begin{proof}
(All other cases contain no edge-transitive graphs)
In cases $2$, $3$, $4$, $8$, $12$, $13$, $14$, $15$, $17$, $18$, $19$, $21$, $23$, $25$, $27$, $28$, and $29$, $\left(A_{0},A_{1}\right)$ and $\left(A_{0},B_{0}\right)$ have different successor types, so graphs from these cases cannot be edge-transitive.

Let $\Gamma$ be a graph from case $6$, where $N_{6} = 4$. Let $S$ be the set of four $6$-cycles containing $\left(A_{0},A_{1}\right)$ and let $T$ be the set of $6$-cycles containing $\left(A_{0},B_{0}\right)$. We note that $S$ can be partitioned into two pairs: two $6$-cycles which contain the $2$-arc $\left(A_{0},A_{1},B_{1}\right)$, and two which contain the $2$-arc $\left(B_{-b},A_{0},A_{1}\right)$. Though there are two $6$-cycles in $T$ which contain the $2$-arc $\left(A_{0},B_{0},A_{b}\right)$ and two which contain the $2$-arc $\left(A_{-1},A_{0},B_{0}\right)$, they do not partition $T$. Therefore $\Gamma$ cannot admit an automorphism sending $\left(A_{0},A_{1}\right)$ to $\left(A_{0},B_{0}\right)$, and therefore cannot be edge-transitive.

The proof for case $7$ is nearly identical to the proof for case $6$.

Let $\Gamma$ be a graph from case $9$, so that $N_{6} = 5$. Suppose that $\Gamma$ is edge-transitive, and let $\sigma$ be an automorphism such that $\left(A_{0},A_{1}\right)\sigma = \left(A_{0},B_{0}\right)$. We will show that this assumption leads to a contradiction. A short investigation shows that the predecessor type and successor type are the same for $\Gamma$, specifically $\{2,2,1\}$. In particular, $U = \left(A_{0},A_{1},B_{1-b},C_{-1},B_{-1},A_{-1}\right)$ is the only $6$-cycle containing $\left(A_{-1},A_{0},A_{1}\right)$ and $V = \left(A_{0},A_{1},A_{2},B_{2-b},C_{0},B_{0}\right)$ is the only $6$-cycle containing $\left(A_{0},A_{1},A_{2}\right)$. Also, $W = \left(A_{0},B_{0},C_{-2+b},C_{-5+b},C_{-b},B_{-b}\right)$ is both the only $6$-cycle containing $\left(B_{-b},A_{0},B_{0}\right)$ and the only $6$-cycle containing $\left(A_{0},B_{0},C_{-2+b}\right)$. Hence $\sigma$ would need to send $U$ to $W$ but would also need to send $V$ to $W$. Clearly $\sigma$ cannot do both, yielding a contradiction as claimed.

The proof for case $10$ is nearly identical to the proof for case $9$.

Let $\Gamma$ be a graph from case $11$, so that $N_{6} = 5$. Suppose that $\Gamma$ is edge-transitive, and let $\sigma$ be an automorphism sending $\left(A_{0},A_{1}\right)$ to $\left(A_{0},B_{0}\right)$. We will show that this assumption leads to a contradiction. First note that the predecessor and successor types for $\Gamma$ are both $\{2,2,1\}$. Also, the $6$-cycle $\left(A_{0},A_{1},A_{2},B_{2-b},C_{0},B_{0}\right)$ is both the only $6$-cycle containing $\left(A_{0},A_{1},A_{2}\right)$ and the only $6$-cycle containing $\left(A_{0},B_{0},C_{0}\right)$, so $\sigma$ must take this $6$-cycle and reverse it about $A_{0}$. Similarly, the only $6$-cycle containing $\left(A_{-1},A_{0},A_{1}\right)$ is $\left(A_{-1},A_{0},A_{1},B_{1-b},C_{-1},B_{-1}\right)$ and the only $6$-cycle containing $\left(B_{-b},A_{0},B_{0}\right)$ is $\left(B_{-b},A_{0},B_{0},C_{-2+b},C_{-3+3b},B_{-b}\right)$, so $\sigma$ must send the former to the latter. This in turn implies that the $3$-arc $P = \left(B_{1-b},A_{1},A_{2},B_{2-b}\right)$ is mapped via $\sigma$ to $Q = \left(C_{-2+b},B_{0},C_{0},B_{2-b}\right)$. However, $P$ is contained in a $6$-cycle, while $Q$ is not; therefore $\sigma$ cannot be an automorphism, giving us a contradiction.

Let $\Gamma$ be a graph from case $20$, where $N_{6} = 6$. The successor type for all arcs in this graph is $\{1,2,3\}$. It is simple to show that there are three $6$-cycles containing $\left(A_{0},A_{1},B_{1-b}\right)$ and that two of these contain $B_{0}$. It is also easy to find that each of the three $6$-cycles containing $\left(A_{0},B_{0},C_{0}\right)$ contain distinct neighbors of $A_{0}$. Therefore $\Gamma$ cannot admit an automorphism sending $\left(A_{0},A_{1}\right)$ to $\left(A_{0},B_{0}\right)$.

The proof for case $22$ is nearly identical to the proof for case $20$.

Let $\Gamma$ be a graph from case $30$, where $N_{6} = 9$. The predecessor type and successor type for all arcs in $\Gamma$ is $\{2,3,4\}$. Now, suppose that $\Gamma$ is edge-transitive, and let $\sigma$ be an automorphism sending $\left(A_{0},A_{1}\right)$ to $\left(A_{0},B_{0}\right)$. There are precisely four $6$-cycles containing $\left(A_{0},A_{1},B_{d}\right)$ and four containing $\left(A_{0},B_{0},C_{0}\right)$; therefore $B_{d}\sigma = C_{0}$. Similarly, there are four $6$-cycles containing each of $\left(B_{0},A_{0},A_{1}\right)$ and $\left(A_{1},A_{0},B_{0}\right)$, so $B_{0}\sigma = A_{1}$. However, while there are two $6$-cycles containing the $3$-arc $U = \left(B_{0},A_{0},A_{1},B_{d}\right)$, there is only one containing $V = \left(A_{1},A_{0},B_{0},C_{0}\right)$, and yet $U\sigma = V$. This gives the expected contradiction.

The proof for case $31$ is nearly identical to the proof for case $30$.
\end{proof}

We have established that all edge-transitive propeller graphs with girth at least $5$ and $n>78$ must be, up to isomorphism, a member of family $1$, family $2$, or family $3$. With Lemma \ref{Lem:lessthan78}, this completes the classification of edge-transitive propeller graphs of girth at least $5$.

\section{Additional avenues of study}
\label{S:Further}

We have been able to identify the automorphism groups of all arc-transitive propeller graphs; these results will be presented in a future paper. Meanwhile, the identification of automorphism groups of non-arc-transitive propeller graphs is still an open question worth pursuing.

Also, the classificaiton of vertex-transitive propeller graphs is still open, although preliminary investigations have yielded a few conjectures.

\begin{conjecture}
Let $\Gamma$ be a vertex-transitive propeller graph with $G=Aut\left(\Gamma\right)$. Then one of the following must be true:
\begin{itemize}
\item $\Gamma$ is arc-transitive, or
\item $\Gamma \cong \Pr_{2m}\left(2d,2,d\right)$ where $d$ is odd, in which case either $m\ne 2d$ and $\vert G\vert = 12m$, or $m=2d$ and $\vert G\vert = 48m$.
\end{itemize}
\end{conjecture}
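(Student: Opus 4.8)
The plan is to prove the dichotomy in two stages: first that a vertex-transitive propeller graph which is not arc-transitive must have parameters of the shape $\Pr\nolimits_{2m}\left(2d,2,d\right)$ with $d$ odd, and then that for such a graph $|G|$ equals $12m$ or $48m$ according as $m\ne 2d$ or $m=2d$. As in the edge-transitive classification I would first dispose of small cases: a vertex-transitive propeller graph of girth $3$ or $4$ has severely restricted parameters by the analyses of Sections~\ref{S:g=3} and~\ref{S:g=4} together with a finite computer search, and a direct check of those graphs and of all $\Gamma$ with $n$ below the bound of Lemma~\ref{Lem:lessthan78} settles the statement there. So assume $g\ge 5$, $n$ large, and $\Gamma=\Pr\nolimits_{n}\left(b,c,d\right)$ vertex-transitive but not arc-transitive. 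Since $\rho$ is transitive on each of the sets of $A$-, $B$- and $C$-vertices, vertex-transitivity is equivalent to the existence of an automorphism $\phi\in G$ with $A_{0}\phi=B_{0}$ together with one sending $A_{0}$ to a $C$-vertex; fix such a $\phi$.

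For the parameter reduction, the structural tool is that every propeller graph decomposes canonically into two complementary $2$-factors: $\mathcal{F}_{1}$, the $A$-wings together with all $C$-flats and $C$-blades, and $\mathcal{F}_{2}=E(\Gamma)\setminus\mathcal{F}_{1}$, the $C$-wings together with all $A$-flats and $A$-blades. The components of $\mathcal{F}_{1}$ are one $n$-cycle through the $A$-vertices and $\gcd(c,n)$ cycles of length $2n/\gcd(c,n)$ alternating $B$- and $C$-vertices, while those of $\mathcal{F}_{2}$ are $\gcd(d,n)$ cycles through the $C$-vertices and $\gcd(b,n)$ cycles of length $2n/\gcd(b,n)$ alternating $A$- and $B$-vertices; so each edge lies in a distinguished long cycle built from edges "of its own kind", and since the $A/B/C$ partition is not $G$-invariant (the induced graph on the $B$-vertices is edgeless while those on the $A$- and $C$-vertices are not), $\phi$ genuinely mixes the vertex types and must move these cycles around. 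The key additional fact is that, because $\Gamma$ is not arc-transitive, no automorphism can carry any $A$-wing to any $A$-spoke (via Lemma~\ref{Lem:WingtoFlat} and the reflection $\mu\rho$), so in particular the image of the $A$-wing $n$-cycle under $\phi$ must be a $C$-wing cycle or an $A$--$B$ spoke cycle. Running this through $\phi$ on the neighbourhood and second neighbourhood of $A_{0}$, in the style of the proofs of Cases~$1$, $5$, $16$ and~$26$, forces tight relations among $\gcd(b,n)$, $\gcd(c,n)$, $\gcd(d,n)$ and, pushing further, among the residues themselves, leading after normalisation via Lemmas~\ref{Lem:Isom1} and~\ref{Lem:Isom2} to $\Gamma\cong\Pr\nolimits_{2m}\left(2d,2,d\right)$; the residue $d$ is then seen to be odd, since $d\equiv\pm 2$ already gives girth~$3$ and the other even residues either violate the $2$-factor length count or make $\Gamma$ arc-transitive. \emph{This parameter reduction is the main obstacle}: vertex-transitivity is weaker than edge-transitivity, Lemma~\ref{Lem:WingtoFlat} does not apply directly, and one must carry out a vertex-transitive analogue of the $31$-case analysis of Section~\ref{S:g>4}, which is larger and subtler.

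For the second stage, fix $\Gamma=\Pr\nolimits_{2m}\left(2d,2,d\right)$ with $d$ odd, vertex-transitive but not arc-transitive, so $d^{2}\not\equiv\pm 1\pmod{2m}$ by the classifications of families~$1$ and~$2$. We have $\langle\rho,\mu\rangle\le G$ of order $4m$ with the involution $\mu$ fixing $A_{0}$, and $\phi$ gives $|G|=|V(\Gamma)|\cdot|G_{A_{0}}|=6m\,|G_{A_{0}}|$, so it remains to compute $|G_{A_{0}}|$. By the key fact above, every element of $G_{A_{0}}$ fixes the pair of $A$-wing neighbours of $A_{0}$, hence also the pair of spoke neighbours, and the rigidity from the first stage shows that an automorphism is determined once its action on these four vertices is fixed; enumerating the possibilities, in the generic situation only $1$ and $\mu$ survive, so $|G_{A_{0}}|=2$ and $|G|=12m$. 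When $m=2d$ one has $b=2d=n/2$ and $d=n/4$, so $n\equiv 4\pmod 8$, the $C$-wings close into $n/4$ quadrilaterals, and the $A$-flats and $A$-blades join each $B$-vertex to $A_{i}$ and $A_{i+n/2}$; these extra coincidences yield a nontrivial automorphism fixing $A_{0}$ and all of its neighbours, and one checks that $G_{A_{0}}$ then has order exactly~$8$, whence $|G|=48m$. In both cases the matching upper bound is immediate, since the rigidity leaves no room for an automorphism not already accounted for.
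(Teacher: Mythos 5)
You should first note what the paper actually does with this statement: it is Conjecture~1, listed in the closing section among ``additional avenues of study,'' and the paper offers no proof of it at all. So there is no argument of the paper to compare against, and the question is simply whether your text constitutes a proof. It does not; it is a research programme in which every hard step is asserted rather than carried out. The base-case reduction already fails: Lemma~\ref{Lem:lessthan78} and the girth-$3$ and girth-$4$ analyses of Sections~\ref{S:g=3} and~\ref{S:g=4} are all predicated on edge- or arc-transitivity (they begin by invoking an automorphism sending $\left(A_{0},A_{1}\right)$ to $\left(A_{0},B_{0}\right)$, which a vertex-transitive but not edge-transitive graph need not have), so they place no restriction on the graphs you are considering, and no computer search over vertex-transitive propeller graphs with small $n$ exists in the paper or is performed by you. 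The central step --- that pushing $\phi$ through the neighbourhood and second neighbourhood of $A_{0}$ forces $\Gamma\cong\Pr\nolimits_{2m}\left(2d,2,d\right)$ with $d$ odd --- is exactly the vertex-transitive analogue of the $31$-case analysis of Section~\ref{S:g>4}, which you yourself describe as ``larger and subtler'' and then do not do. Moreover the $2$-factor decomposition $\mathcal{F}_{1},\mathcal{F}_{2}$ is defined from the labelling, not shown to be $G$-invariant, so the assertion that $\phi$ must carry the $A$-wing cycle to a $C$-wing cycle or an $A$--$B$ spoke cycle needs justification; the wing-to-spoke exclusion you cite requires an edge-level strengthening of Lemma~\ref{Lem:WingtoFlat}, whose statement concerns one specific arc.

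The second stage has the same character. The identity $\vert G\vert = 6m\,\vert G_{A_{0}}\vert$ is fine, but ``in the generic situation only $1$ and $\mu$ survive, so $\vert G_{A_{0}}\vert = 2$'' and ``one checks that $G_{A_{0}}$ then has order exactly $8$'' when $m=2d$ are precisely the content of the conjecture, not consequences of anything you have established; the rigidity you appeal to was proved in the paper only for specific cases under arc-transitivity. The paper's own discussion of wreath-like behaviour (vertex stabilizers exponential in the number of vertices for $\Pr_{4}\left(2,2,1\right)$) shows that stabilizer orders in this family cannot be bounded by a generic ``determined by its action on four vertices'' argument without detailed case work. In short: the parameter reduction, the oddness of $d$, and both stabilizer computations are all stated but not proved, so the conjecture remains open after your proposal exactly as it was before it.
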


\begin{conjecture}
Let $\Gamma$ be a propeller graph with two edge-orbits and $G=Aut(\left(\Gamma\right)$. Then one of the following must be true:
\begin{itemize}
\item $\Gamma$ is vertex-transitive, or
\item $\Gamma \cong \Pr_{2m}\left(m,m,d\right)$ where $\left(d,2m\right)=1$, in which case $\vert G\vert = 2^{m+3}\cdot 2m$, or
\item $\Gamma \cong \Pr_{n}\left(b,bd,d\right)$ where $d^2\EQ \pm 1$, in which case $\vert G\vert = 4n$.
\end{itemize}
\end{conjecture}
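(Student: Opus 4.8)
The plan is to determine the edge orbits of $G$ by first pinning down its vertex orbits. Because $\left<\rho,\mu\right>\le G$ already realizes the four edge classes---$A$-wings, $A$-spokes, $C$-spokes, and $C$-wings---as its edge orbits, every orbit of $G$ is a union of these four classes, and having exactly two edge orbits amounts to partitioning the four classes into two blocks. Likewise, $\left<\rho\right>$ splits the vertices into the three orbits of $A$-, $B$-, and $C$-vertices, so the vertex orbits of $G$ are unions of these. I would first exclude three vertex orbits: if the $A$-, $B$-, and $C$-vertices are pairwise inequivalent, then the four edge classes carry four distinct endpoint-orbit multisets, namely $\{A,A\}$, $\{A,B\}$, $\{C,B\}$, and $\{C,C\}$, forcing four edge orbits rather than two. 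The same endpoint-multiset invariant kills the vertex partitions $\left\{A\cup B\right\},\left\{C\right\}$ and $\left\{B\cup C\right\},\left\{A\right\}$: in each, three of the four endpoint multisets are distinct, so the only merge consistent with the invariant is a wing class with a spoke class built on the same orbit, which by Lemma~\ref{Lem:WingtoFlat} (and its evident $C$-vertex analogue) happens only when $\Gamma$ is edge-transitive. Thus the edge-orbit count is forced to be one or at least three, never two, and a non-vertex-transitive $\Gamma$ with two edge orbits must have vertex orbits exactly $\left\{A_i\right\}\cup\left\{C_i\right\}$ and $\left\{B_i\right\}$.

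With the $B$-vertices forming their own orbit, ``incident to a $B$-vertex'' becomes an automorphism invariant of edges, so spokes cannot mix with wings; the two edge orbits must therefore be the wings $\left\{A\text{-wings},C\text{-wings}\right\}$ and the spokes $\left\{A\text{-spokes},C\text{-spokes}\right\}$. In particular there is an automorphism $\psi$ carrying an $A$-wing to a $C$-wing. Since $\psi$ maps the $A$-wing subgraph---a single $n$-cycle---onto a component of the $C$-wing subgraph, and automorphisms preserve component sizes, the $C$-wings must also form a single $n$-cycle, which forces $\left(d,n\right)=1$. Reading off the action of $\psi$ on consecutive wing vertices shows that $\psi$ is affine and exchanges the $A$- and $C$-vertices while fixing the $B$-vertices setwise, so after composing with a power of $\rho$ and with $\mu$ it has exactly the shape of the isomorphism of Lemma~\ref{Lem:Isom2}. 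Requiring that isomorphism to be a self-map of $\Gamma$ then forces $e\equiv\pm d$, i.e. $d^{2}\EQ\pm 1$, together with $c\EQ bd$, placing $\Gamma$ in the form $\Pr_{n}\left(b,bd,d\right)$---unless the identifications degenerate to the case $b\EQ c\EQ\tfrac{n}{2}$, i.e. $\Gamma\cong\Pr_{2m}\left(m,m,d\right)$.

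It then remains to compute $\vert G\vert$ in the two resulting families. For $\Pr_{n}\left(b,bd,d\right)$ with $d^{2}\EQ\pm 1$ I would show $G=\left<\rho,\mu,\psi\right>$: the dihedral group $\left<\rho,\mu\right>$ has order $2n$, $\psi$ extends it by index $2$, and an arc-stabilizer argument---an automorphism fixing the arc $\left(A_{0},A_{1}\right)$ and respecting the wing/spoke orbits is determined vertex-by-vertex along the wing cycle, hence trivial---gives $\vert G\vert=4n$. For the degenerate family $\Pr_{2m}\left(m,m,d\right)$ the relations $b\equiv c\equiv m$ make $B_{i}$ and $B_{i+m}$ twins with identical neighborhoods, so the $m$ twin-transpositions, together with $\rho$, $\mu$, and $\psi$, must be assembled into a group of order $2^{m+3}\cdot 2m$. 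Finally I would verify the converse: each listed graph really does have exactly two edge orbits and fails to be vertex-transitive, since no automorphism can send a $B$-vertex, which meets no wing edge, to an $A$- or $C$-vertex, each of which meets two.

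The main obstacle is the order computation for $\Pr_{2m}\left(m,m,d\right)$: identifying the full automorphism group generated by the twin-transpositions, showing it contributes exactly the factor $2^{m+3}$ on the $B$-fibres, and checking that there is no unexpected fusion against $\left<\rho\right>$, is considerably more delicate than the clean $4n$ count for the generic family. The parameter bookkeeping in separating the $d^{2}\EQ 1$ and $d^{2}\EQ -1$ sub-cases, and in detecting exactly when the generic form collapses to $b\equiv c\equiv\tfrac{n}{2}$, is the other place where care is required.
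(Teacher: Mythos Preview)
The statement you are attempting to prove is presented in the paper as a \emph{conjecture} (Conjecture~2 in Section~\ref{S:Further}), not as a theorem; the paper offers no proof whatsoever, only the remark that ``preliminary investigations have yielded a few conjectures.'' There is therefore no proof in the paper against which your proposal can be compared.

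That said, a few comments on your outline as a proof attempt in its own right. The reduction of the vertex-orbit structure to $\{A\cup C\},\{B\}$ via endpoint-multiset invariants, together with the appeal to Lemma~\ref{Lem:WingtoFlat}, is sound and efficient, and the observation that $(d,n)=1$ because the $A$-wing $n$-cycle must map onto a single $C$-wing component is correct. The step where you assert that $\psi$ ``has exactly the shape of the isomorphism of Lemma~\ref{Lem:Isom2}'' is the first real gap: knowing $\psi$ is an isomorphism of $n$-cycles only gives $\psi(A_i)=C_{\pm di+k}$, and the compatible action on $B$-vertices must then be solved for; the case analysis (which you allude to but do not carry out) is where both the constraint $c\equiv bd$ and the dichotomy $d^{2}\equiv\pm 1$ versus $2b\equiv 0$ actually emerge, and it is not automatic that these exhaust the possibilities.

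The more serious gap, which you yourself flag, is the order computation. For the generic family your arc-stabilizer sketch is essentially right, but note that it silently uses $2b\not\equiv 0$: an automorphism fixing every $A_i$ can still swap $B_i\leftrightarrow B_{i-b}$ precisely when $b\equiv -b$, so the ``hence trivial'' conclusion fails exactly at $b=m$, which is why that case splits off. For $\Pr_{2m}(m,m,d)$ you correctly identify the twin pairs $\{B_i,B_{i+m}\}$, but the $m$ transpositions only furnish a subgroup of order $2^{m}$; reaching $2^{m+3}\cdot 2m$ and, more importantly, showing that nothing further exists, is a genuine open problem as far as the paper is concerned. Your proposal is a reasonable strategy toward the conjecture, but it does not yet constitute a proof, and the paper makes no claim that one is currently available.
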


\begin{conjecture}
Let $\Gamma$ be a propeller graph with four edge-orbits and $G=Aut\left(\Gamma\right)$. Then one of the following must be true:
\begin{itemize}
\item $G = \left<\rho,\mu\right>$ is dihedral of order $2n$ (this is the default condition), or
\item $\Gamma \cong \Pr_{4d}\left(2d,2d,d\right)$, in which case $\vert G\vert = 2^{4d+1}\cdot 4d$, or
\item $\Gamma \cong \Pr_{2m}\left(m,m,d\right)$ where $2d<m$, in which case $\vert G\vert = 2^{m+(d,m)+1}\cdot 2m$, or
\item $\Gamma \cong \Pr_{4d}\left(b,2d,d\right)$, in which case $\vert G\vert = 2^{2d+1}\cdot 4d$, or
\item $\Gamma \cong \Pr_{2m}\left(b,m,d\right)$ where $2d<m$, in which case $\vert G\vert = 2^{1+(d,m)}\cdot 2m$, or
\item $\Gamma \cong \Pr_{2m}\left(m,c,d\right)$ where one of the following holds:
	\begin{itemize}[$\circ$]
		\item $2c=2d=m$, in which case $\vert G\vert = 2^{m+1}\cdot 2m$, or
		\item $2c=m$, $2d\ne m$, $c>1$, $\left(d,m\right)>1$, in which case $\vert G\vert = 2^{1+(d,m)}\cdot 2m$, or
		\item $2c\ne m$, $2d=m$, $\left(c,m\right)>1$, $d>1$, in which case $\vert G\vert = 2^{1+(c,m)}\cdot 2m$, or
		\item $\vert G\vert = 2^{1+(m,c,d)}\cdot 2m$.
	\end{itemize}
\end{itemize}
\end{conjecture}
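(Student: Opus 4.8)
The plan is to reduce the computation of $\vert G\vert$ to the order of a single very concrete subgroup, and then pin that subgroup down form by form. Since $\Gamma$ has exactly four edge-orbits, those orbits must be precisely the $A$-wings, the $A$-spokes, the $C$-spokes, and the $C$-wings: already $\langle\rho,\mu\rangle\le G$ has these four sets as its edge-orbits, and having four $G$-orbits forces no two of them to merge. In particular every automorphism preserves the set of $A$-wings, hence fixes the $n$-cycle $\left(A_{0},A_{1},\ldots,A_{n-1}\right)$ as a subgraph and acts on the $A$-vertices through a dihedral symmetry. As $\rho$ and $\mu$ already realise all of $D_{n}$ on that cycle and the only element of $\langle\rho,\mu\rangle$ fixing every $A$-vertex is the identity, the action on $\{A_{i}\}$ gives an onto homomorphism $G\to D_{n}$ with kernel $K$, the pointwise stabiliser of the $A$-vertices; hence $\langle\rho,\mu\rangle\cong D_{n}$ has order $2n$ and $\vert G\vert=2n\vert K\vert$. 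Everything reduces to computing $\vert K\vert$.

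First I would analyse $K$ locally. An element $\kappa\in K$ fixes each $A_{i}$, hence permutes its two spoke-neighbours $B_{i},B_{i-b}$, so $\kappa$ fixes setwise every edge of the $2$-regular circulant on the $B$-vertices with connection set $\{\pm b\}$. If $b\ne\frac{n}{2}$ this circulant is a disjoint union of cycles of length at least $3$, and an automorphism fixing every edge of such a cycle is the identity on it; thus $\kappa$ is trivial on the $B$-vertices. Running the same argument through the two $C$-neighbours $C_{i},C_{i+c}$ of each (now fixed) $B_{i}$ shows that if moreover $c\ne\frac{n}{2}$ then $\kappa$ is trivial on the $C$-vertices too, so $K=1$ and $G=\langle\rho,\mu\rangle$ is dihedral of order $2n$, the default case. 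Thus $G$ properly contains $\langle\rho,\mu\rangle$ only when $b=\frac{n}{2}$ or $c=\frac{n}{2}$; using Lemmas \ref{Lem:Equal}, \ref{Lem:Isom1}, and \ref{Lem:Isom2} to normalise the parameters, the graphs of this kind are exactly the five special forms in the statement, sorted by whether one or both of $b,c$ equals $\frac{n}{2}$ and whether the third parameter makes $n$ a multiple of $4$.

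For each special form $\vert K\vert$ is then computed by the same mechanism: whenever a connection step equals $\frac{n}{2}$ the corresponding circulant on the $B$- or $C$-vertices collapses to a perfect matching, and $\kappa$ may transpose the endpoints of each matched pair, subject only to compatibility with the remaining edge-types. When exactly one of $b,c$ --- say $c$ --- equals $\frac{n}{2}$, the $A$-side still forces $\kappa$ trivial on the $B$-vertices, while a flip pattern $\varepsilon\colon\mathbb{Z}_{n/2}\to\mathbb{Z}_{2}$ on the $C$-vertices is admissible precisely when it is constant on the cosets of $\langle d\rangle$ (compatibility with the step-$d$ $C$-wing cycles), giving $\vert K\vert=2^{(d,\,n/2)}$; the lone exception is the form $\Pr_{4d}\left(b,2d,d\right)$, where $2d=\frac{n}{2}$ makes this constraint vacuous and $\vert K\vert=2^{n/2}$. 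When both $b$ and $c$ equal $\frac{n}{2}$ the $B$-vertices and the $C$-vertices each carry their own flip pattern, essentially independently, and one must verify that every such pair extends to a genuine automorphism of $\Gamma$; the count becomes $2^{n/2}\cdot 2^{(d,\,n/2)}$, degenerating to $2^{n/2}\cdot 2^{n/2}$ when $2d=\frac{n}{2}$, i.e.\ for $\Pr_{4d}\left(2d,2d,d\right)$. Multiplying through by $2n$ recovers every group order in the statement.

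The hard part will be the coupled bookkeeping for the form $\Pr_{2m}\left(m,c,d\right)$ and its boundary case $b=c=\frac{n}{2}$. There the flip patterns on the $B$-vertices and on the $C$-vertices are not independent: they are linked through the step-$c$ $C$-spokes and further constrained by the step-$d$ $C$-wings, so the admissible patterns form a subgroup cut out by several simultaneous periodicity conditions whose joint rank depends on how $2c$, $2d$, and $m$ compare and on the gcd's $(c,m)$, $(d,m)$, $(m,c,d)$ --- which is exactly why the last bullet of the statement splits into four sub-cases. A secondary subtlety is that the parameter normalisations via the isomorphism lemmas, and the sorting into these families, must be carried out without changing the number of edge-orbits; at the end one should confirm that the extra automorphisms constructed above really do preserve all four edge-classes --- which they clearly do, being built from transpositions within spoke-pairs and within wing-matchings --- so that these graphs genuinely live in the four-edge-orbit regime and do not slip into the vertex-transitive or edge-transitive cases treated elsewhere.
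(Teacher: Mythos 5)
First, a caveat about the comparison you asked for: the statement you are proving is one of the paper's \emph{conjectures} about propeller graphs with four edge-orbits; the paper offers no proof of it (it is presented as the outcome of preliminary computational investigation), so there is no argument of record to measure yours against. Judged on its own terms, your skeleton is sound and is almost certainly the right framework: with exactly four edge-orbits each $G$-orbit must coincide with one of the four $\langle\rho,\mu\rangle$-classes, so $G$ preserves the $A$-wings, restriction to the $A$-cycle gives a surjection $G\to D_{n}$, and $\vert G\vert = 2n\vert K\vert$ with $K$ the pointwise stabiliser of the $A$-vertices; your local propagation argument correctly yields $K=1$ unless $b=\frac{n}{2}$ or $c=\frac{n}{2}$, and the flip-pattern counts you do carry out --- $\vert K\vert = 2^{(d,n/2)}$ for $c=\frac{n}{2}$, $b\ne\frac{n}{2}$, $2d\not\equiv\frac{n}{2}$; $\vert K\vert = 2^{n/2}$ for $\Pr_{4d}(b,2d,d)$; $\vert K\vert = 2^{n/2+(d,n/2)}$ and $2^{n}$ in the two $b=c=\frac{n}{2}$ forms --- do reproduce the conjectured orders $2^{1+(d,m)}\cdot 2m$, $2^{2d+1}\cdot 4d$, $2^{m+(d,m)+1}\cdot 2m$, and $2^{4d+1}\cdot 4d$ after multiplying by $2n$.

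The genuine gap is the family $\Pr_{2m}(m,c,d)$ with $c\ne m$, i.e.\ the entire last bullet of the statement with its four sub-cases, which is exactly where the conjecture's content is most delicate. There the kernel does not split into independent flip patterns: if $\kappa$ transposes $\{B_{i},B_{i+m}\}$ it must carry $\{C_{i},C_{i+c}\}$ onto $\{C_{i+m},C_{i+m+c}\}$ without a canonical pairing, and each $C_{j}$ is constrained simultaneously through $B_{j}$ and $B_{j-c}$ and through the step-$d$ wings, so $K$ is a subgroup of a wreath-like configuration rather than a product of two flip groups. You acknowledge the coupling but give no computation: in particular you provide no upper bound showing that the admissible patterns are exhausted by the visible ones. (The lower bound $2^{(m,c,d)}$ is easy --- shift $B$ and $C$ simultaneously by $m$ on any union of residue classes mod $(m,c,d)$ --- but the claimed equalities $2^{m+1}$, $2^{1+(d,m)}$, $2^{1+(c,m)}$, $2^{1+(m,c,d)}$ in the four sub-cases, and the case distinctions $2c=m$, $2d=m$ that govern them, are precisely what remains to be proved.) Secondary loose ends --- verifying that every $B$/$C$ flip pair in the $b=c=\frac{n}{2}$ case really is an automorphism (it is, but you only assert it), and the parameter normalisation into the listed forms, e.g.\ reconciling your condition $2d\not\equiv\frac{n}{2}$ with the statement's ``$2d<m$'' --- are routine by comparison but also unwritten. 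As it stands the proposal is a credible programme for settling the conjecture, not a proof of it.
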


There is also the question of generalizing the family of propeller graphs. This is a highly precedented suggestion; in particular, generalized Petersen graphs have been further generalized in several different ways. The $I$\emph{-graphs} from the Foster census \cite{FosterCensus} contain the generalized Petersen graphs, and the $I$-graphs are a subset of the $GI$\emph{-graphs}, introduced and studied by Conder, Pisanski, and Zitnik \cite{GIGraphs}. Meanwhile, Sarazin, Pacco, and Previtali ``supergeneralized'' the generalized Petersen graphs in a different way entirely \cite{SupergeneralizedPetersenGraphs}. In some ways, Wilson's \emph{rose window graphs} \cite{RoseWindowGraphs} can be thought of as a tetravalent analogue to generalized Petersen graphs as well.

We propose initially a generalization similar to that of $I$-graphs over generalized Petersen graphs; namely, to introduce a new parameter $a$ so that $\{A_{i},A_{i+a}\}$ is an edge of the graph, rather than $\{A_{i},A_{i+1}\}$. If we denote the generalized propeller graph by $GPr_{n}\left(a,b,c,d\right)$, then $GPr_{n}\left(1,b,c,d\right)$ is the propeller graph $\Pr_{n}\left(b,c,d\right)$. Though we have not studied this new family in great detail, we were able to obtain that $GPr_{10}\left(2,3,1,4\right)$ is arc-transitive and not a propeller graph, as introduced here. No others have yet be found, and it would be interesting to discover whether any others exist at all.

A theorem by Conway prompts a third avenue of investigation. A cycle of a graph is declared \emph{consistent} whenever there exists an automorphism of the graph which acts on the cycle as a one-step rotation. This Biggs-Conway theorem (so-called because it seems that the only record of this talk can be found in a paper by Biggs \cite{BiggsConway}) states that a $d$-valent arc-transitive graph admits $d-1$ orbits of consistent cycles. Given that we have strong restrictions on the parameters of arc-transitive propeller graphs, it may be possible to thoroughly determine the consistent cycle orbits admitted by members of each family.

\appendix
\section{Appendix}
Produced below is a table displaying the $48$ distinct classes of $6$-cycles in a propeller graph.
\begin{longtable}{| p{.20in} | p{1.7in} | p{.25in} | p{.25in} | p{.25in} | p{.25in} | p{.25in} | p{.25in} |}
\hline
\multicolumn{1}{|l|}{No.} & \multicolumn{1}{|l|}{Relations} & \multicolumn{1}{|l|}{$q(\mathcal{X})$} & \multicolumn{1}{|l|}{$r(\mathcal{X})$} & \multicolumn{1}{|l|}{$s(\mathcal{X})$} & \multicolumn{1}{|l|}{$t(\mathcal{X})$} & \multicolumn{1}{|l|}{$u(\mathcal{X})$} & \multicolumn{1}{|l|}{$v(\mathcal{X})$}\\
\hline
$1$ & None ($A$-canonical) & $2$ & $2$ & $2$ & $0$ & $0$ & $0$\\
$2$ & None ($C$-canonical) & $0$ & $0$ & $0$ & $2$ & $2$ & $2$\\
$3$ & $n = 6$ & $1$ & $0$ & $0$ & $0$ & $0$ & $0$\\
$4$ & $b\EQ -4$ & $4$ & $1$ & $1$ & $0$ & $0$ & $0$\\
$5$ & $b\EQ 4$ & $4$ & $1$ & $1$ & $0$ & $0$ & $0$\\
$6$ & $2b\EQ -2$ & $3$ & $3$ & $3$ & $0$ & $0$ & $0$\\
$7$ & $2b\EQ 2$ & $3$ & $3$ & $3$ & $0$ & $0$ & $0$\\
$8$ & $d\EQ -1$ & $4$ & $4$ & $4$ & $4$ & $4$ & $4$\\
$9$ & $d\EQ 1$ & $4$ & $4$ & $4$ & $4$ & $4$ & $4$\\
$10$ & $1+b+d\EQ 0$ & $2$ & $2$ & $2$ & $2$ & $2$ & $2$\\
$11$ & $1+b\EQ d$ & $2$ & $2$ & $2$ & $2$ & $2$ & $2$\\
$12$ & $1+d\EQ b$ & $2$ & $2$ & $2$ & $2$ & $2$ & $2$\\
$13$ & $1\EQ b+d$ & $2$ & $2$ & $2$ & $2$ & $2$ & $2$\\
$14$ & $1+c+d\EQ 0$ & $2$ & $2$ & $2$ & $2$ & $2$ & $2$\\
$15$ & $1+c\EQ d$ & $2$ & $2$ & $2$ & $2$ & $2$ & $2$\\
$16$ & $1+d\EQ c$ & $2$ & $2$ & $2$ & $2$ & $2$ & $2$\\
$17$ & $1\EQ c+d$ & $2$ & $2$ & $2$ & $2$ & $2$ & $2$\\
$18$ & $1+b+c+d\EQ 0$ & $1$ & $1$ & $1$ & $1$ & $1$ & $1$\\
$19$ & $1+b+c\EQ d$ & $1$ & $1$ & $1$ & $1$ & $1$ & $1$\\
$20$ & $1+b+d\EQ c$ & $1$ & $1$ & $1$ & $1$ & $1$ & $1$\\
$21$ & $1+c+d\EQ b$ & $1$ & $1$ & $1$ & $1$ & $1$ & $1$\\
$22$ & $1+b\EQ c+d$ & $1$ & $1$ & $1$ & $1$ & $1$ & $1$\\
$23$ & $1+c\EQ b+d$ & $1$ & $1$ & $1$ & $1$ & $1$ & $1$\\
$24$ & $1+d\EQ b+c$ & $1$ & $1$ & $1$ & $1$ & $1$ & $1$\\
$25$ & $1\EQ b+c+d$ & $1$ & $1$ & $1$ & $1$ & $1$ & $1$\\
$26$ & $3b\EQ 0$ & $0$ & $1$ & $1$ & $0$ & $0$ & $0$\\
$27$ & $3c\EQ 0$ & $0$ & $0$ & $0$ & $1$ & $1$ & $0$\\
$28$ & $2b\EQ -c$ & $0$ & $2$ & $2$ & $1$ & $1$ & $0$\\
$29$ & $2b\EQ c$ & $0$ & $2$ & $2$ & $1$ & $1$ & $0$\\
$30$ & $b\EQ 2c$ & $0$ & $1$ & $1$ & $2$ & $2$ & $0$\\
$31$ & $b\EQ -2c$ & $0$ & $1$ & $1$ & $2$ & $2$ & $0$\\
$32$ & $b\EQ -2d$ & $0$ & $2$ & $2$ & $2$ & $2$ & $4$\\
$33$ & $b\EQ 2d$ & $0$ & $2$ & $2$ & $2$ & $2$ & $4$\\
$34$ & $c\EQ -2$ & $4$ & $2$ & $2$ & $2$ & $2$ & $0$\\
$35$ & $c\EQ 2$ & $4$ & $2$ & $2$ & $2$ & $2$ & $0$\\
$36$ & $b+c+2d\EQ 0$ & $0$ & $1$ & $1$ & $1$ & $1$ & $2$\\
$37$ & $b+c\EQ 2d$ & $0$ & $1$ & $1$ & $1$ & $1$ & $2$\\
$38$ & $b+2d\EQ c$ & $0$ & $1$ & $1$ & $1$ & $1$ & $2$\\
$39$ & $b\EQ c+2d$ & $0$ & $1$ & $1$ & $1$ & $1$ & $2$\\
$40$ & $2+b+c\EQ 0$ & $2$ & $1$ & $1$ & $1$ & $1$ & $0$\\
$41$ & $2+b\EQ c$ & $2$ & $1$ & $1$ & $1$ & $1$ & $0$\\
$42$ & $2+c\EQ b$ & $2$ & $1$ & $1$ & $1$ & $1$ & $0$\\
$43$ & $2\EQ b+c$ & $2$ & $1$ & $1$ & $1$ & $1$ & $0$\\
$44$ & $2c\EQ 2d$ & $0$ & $0$ & $0$ & $3$ & $3$ & $3$\\
$45$ & $2c\EQ -2d$ & $0$ & $0$ & $0$ & $3$ & $3$ & $3$\\
$46$ & $c\EQ 4d$ & $0$ & $0$ & $0$ & $1$ & $1$ & $4$\\
$47$ & $c\EQ -4d$ & $0$ & $0$ & $0$ & $1$ & $1$ & $4$\\
$48$ & $6d\EQ 0$ & $0$ & $0$ & $0$ & $0$ & $0$ & $1$\\
\hline
\caption{The $48$ relations for all possible $6$-cycles.}
\label{Tab:6CycRels}
\end{longtable}

\bibliographystyle{model1-num-names}
\bibliography{ETPropellers.bib}

\end{document}